\newtheorem{theorem}{Theorem}[section]
\newtheorem{lemma}[theorem]{Lemma}
\newtheorem{proposition}[theorem]{Proposition}
\theoremstyle{definition}
\newtheorem{definition}[theorem]{Definition}
\theoremstyle{remark}
\newtheorem{remark}[theorem]{Remark}
\newtheorem{example}[theorem]{Example}
\numberwithin{equation}{section}
\newcommand{\C}{\mathbb{C}}
\newcommand{\R}{\mathbb{R}}
\newcommand{\N}{\mathbb{N}}
\newcommand{\T}{\mathbb{T}}
\newcommand{\E}{\mathcal{E}}
\newcommand{\lm}{\lambda}
\newcommand{\f}{\varphi}
\newcommand{\Ad}{\operatorname{Ad}}
\newcommand{\WAP}{\operatorname{WAP}}
\newcommand{\wap}{\operatorname{wap}}
\newcommand{\AP}{\operatorname{AP}}
\newcommand{\ap}{\operatorname{ap}}
\newcommand{\cbr}{C_{b,r}}
\newcommand{\ctr}{\operatorname{Ctr}}
\newcommand{\Int}{\operatorname{Int}}
\newcommand{\ep}{\varepsilon}
\newcommand{\B}{\mathcal B}
\newcommand{\Pp}{\mathcal P}
\newcommand{\Om}{\Omega}
\newcommand{\Hh}{\mathcal H}
\newcommand{\K}{\mathcal K}
\newcommand{\Ll}{\mathcal L}
\newcommand{\om}{\omega}
\newcommand{\mf}{\mathfrak{m}}
\newcommand{\co}{\operatorname{co}}
\begin{document}

\title[(Weakly) almost periodic functions and von Neumann algebras]{Almost and weakly almost periodic functions on the unitary groups of von Neumann algebras}

\author{Paul Jolissaint}
\address{Universit\'e de Neuch\^atel,
       Institut de Math\'ematiques,       
       E.-Argand 11,
       2000 Neuch\^atel, Switzerland}
       
\email{paul.jolissaint@unine.ch}

\subjclass[2010]{Primary 46L10, 11K70, 22D25; Secondary 22A10}

\date{\today}

\keywords{Almost periodic functions, minimally almost periodic groups, unitary groups, finite von Neumann algebras, invariant means, conditional expectations}

\begin{abstract}
Let $M\subset B(\Hh)$ be a von Neumann algebra acting on the (separable) Hilbert space $\Hh$. We first prove that $M$ is finite if and only if, for every $x\in M$ and for all vectors $\xi,\eta\in\Hh$, the coefficient function $u\mapsto \langle uxu^*\xi|\eta\rangle$ is weakly almost periodic on the topological group $U_M$ of unitaries in $M$ (equipped with the weak operator topology). The main device is the unique invariant mean on the $C^*$-algebra $\WAP(U_M)$ of weakly almost periodic functions on $U_M$. Next, we prove that every coefficient function $u\mapsto \langle uxu^*\xi|\eta\rangle$ is almost periodic if and only if $M$ is a direct sum of a diffuse, abelian von Neumann algebra and finite-dimensional factors. Incidentally, we prove that if $M$ is a diffuse von Neumann algebra, then its unitary group is minimally almost periodic.
\end{abstract} 

\maketitle

\begin{flushright}
\textit{Dedicated to Pierre de la Harpe}
\end{flushright}

\section{Introduction}

The present work is inspired by the article \cite{HarpeMoy} where P. de la Harpe proved that, if $M$ is a von Neumann algebra with separable predual, then it is Approximately Finite-Dimensional (AFD) if and only if there exists a left invariant mean on the $C^*$-algebra $\cbr(U_M)$ of right uniformly continuous functions on the unitary group $U_M$ of $M$; in other words, $M$ is AFD if and only if the Polish group $U_M$ is amenable. In fact, he used the existence of a left invariant mean on $\cbr(U_M)$ to show that $M$ has Schwartz's property P \cite[Definition 1]{Schwartz}, the latter being equivalent to injectivity hence to approximate finite-dimensionality by \cite{Connes}.

For every topological group $G$, there is a space of continuous functions (in fact a $C^*$-algebra) on which a unique bi-invariant mean always exists: it is the set of all weakly almost periodic functions on $G$. The aim of the present article is then to exploit the existence of such a mean on the group $U_M$. It turns out that it provides a characterization of finite von Neumann algebras. 

In order to present the content of this article, let us recall some definitions and fix notation.

First, let $G$ be a topological group. We denote by $C_b(G)$ the $C^*$-algebra of all bounded, continuous, complex-valued functions on $G$ equipped with the uniform norm $\Vert f\Vert_\infty:=\sup_{s\in G}|f(s)|$. 
For $g\in G$ and $f:G\rightarrow\C$, we denote by $g\cdot f:G\rightarrow\C$ (resp. $f\cdot g$) the left (resp. right) translate of $f$ by $g$, i.e.
\[
(g\cdot f)(s)=f(g^{-1}s)\quad\textrm{and}\quad
(f\cdot g)(s)=f(sg)
\]
for all $f:G\rightarrow\C$ and $g,s\in G$. The corresponding left (resp. right) orbit is denoted by $Gf$ (resp. $fG$). A function $f\in C_b(G)$ is \textit{right uniformly continuous} if $\Vert g\cdot f-f\Vert_\infty\to 0$ as $g\to 1$. The subset of all right uniformly continuous functions $f\in C_b(G)$ is a $C^*$-subalgebra of $C_b(G)$ denoted by $\cbr(G)$, and it contains all right translates of all its elements. Our definition follows that of P. de la Harpe \cite{HarpeMoy} and F.P. Greenleaf \cite{Greenleaf}, but not that of P. Eymard \cite{EymardMoy} for instance.

A function $f\in \cbr(G)$ is \textit{weakly almost periodic} if its orbit $Gf$ is weakly relatively compact in $\cbr(G)$. An application of Hahn-Banach Theorem shows that the orbit is weakly relatively compact in $\cbr(G)$ if and only if it is weakly relatively compact in the larger Banach space $C_b(G)$.

It follows from \cite[Proposition 7]{Grothen} that $Gf$ is weakly relatively compact if and only if $fG$ is (see also Proposition \ref{criterewap} in the appendix). The set of all weakly almost periodic functions on $G$ is denoted by $\WAP(G)$; it is a $C^*$-subalgebra of $\cbr(G)$, and its main feature, which will play a central role here, is the existence of a unique left and right $G$-invariant mean $\mf$ on $\WAP(G)$. We are indebted to S. Knudby for having indicated to us that F.P. Greenleaf's monograph \cite{Greenleaf} contains a proof of that result for locally compact groups, which relies on Ryll-Nardzewski Theorem, but we think that it is worth presenting (for the reader's convenience) a self-contained proof for arbitrary topological groups in the appendix of the present article. Our proof also uses Ryll-Nardzewski Theorem.

Next, let $M\subset B(\Hh)$ be a von Neumann algebra acting on the separable Hilbert space $\Hh$. Our references for von Neumann algebras are the monographs \cite{DivN} and \cite{Tak1}. 
We denote by $U_M$ the group of unitary elements of $M$. It is a topological group, and even a Polish group, when endowed with any of the following equivalent topologies on it: the weak, ultraweak, strong and ultrastrong operator topologies.

For $T\in B(\Hh)$ and $\xi,\eta\in\Hh$, we define the associated \textit{coefficient function} $\xi\star T\star \eta$ on $U_M$ as follows:
\[
\xi\star T\star \eta(u):=\langle uTu^*\xi|\eta\rangle
\]
for every $u\in U_M$.

Here is the first of our main results; its proof is contained in \S 3.

\begin{theorem}\label{1.1}
Let $M\subset B(\Hh)$ be a von Neumann algebra. Then it is finite if and only if, for every $x\in M$, all its coefficient functions $\xi\star x\star \eta$
are weakly almost periodic. If it is the case, there exists a conditional expectation $\E_M:C^*(M,M')\rightarrow M'$ whose restriction to $M$ coincides with the canonical centre-valued trace on $M$.
\end{theorem}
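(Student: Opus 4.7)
The plan is to use the unique invariant mean $\mf$ on $\WAP(U_M)$ to construct $\E_M$ from the WAP hypothesis, and then to play off finiteness against proper infiniteness at the level of induced tracial states. Assuming every $\xi\star x\star \eta$ for $x\in M$ is WAP, I first extend this property to all of $C^*(M,M')$: for $S\in M$ and $T\in M'$, the identity $\xi\star(ST)\star\eta=(T\xi)\star S\star\eta$ (since $T$ commutes with every $u\in U_M$) exhibits the coefficient function as WAP, and because $\WAP(U_M)$ is a norm-closed $*$-subalgebra while $T\mapsto \xi\star T\star\eta$ is linear and contractive, the set of $T\in B(\Hh)$ with WAP coefficient functions is a norm-closed subspace containing the $*$-algebra $M\cdot M'$, hence all of $C^*(M,M')$. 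Then $\langle \E_M(T)\xi|\eta\rangle:=\mf(\xi\star T\star\eta)$ defines $\E_M(T)\in B(\Hh)$ with $\Vert\E_M(T)\Vert\leq\Vert T\Vert$ by Riesz representation. Left $U_M$-invariance of $\mf$ forces $\E_M(T)\in(U_M)'=M'$; right invariance yields $\E_M(vTv^*)=\E_M(T)$; and for $T\in M'$ the coefficient function is constant, so $\E_M|_{M'}=\mathrm{id}$. Thus $\E_M\colon C^*(M,M')\to M'$ is a conditional expectation.

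For the implication WAP $\Rightarrow M$ finite, conjugation-invariance and linearity yield a tracial identity on $M$: for $y\in U_M$, $\E_M(yx)=\E_M(y^{-1}(yx)y)=\E_M(xy)$, extended by linearity since unitaries linearly span $M$ (Russo--Dye). Suppose $M$ has a nonzero properly infinite direct summand, corresponding to a central projection $z\in Z(M)\subset M'$. Transporting these identities to $U_{Mz}$ via the embedding $v\mapsto v+(1-z)\in U_M$, one finds that for any state $\omega$ on $M'$, $\omega\circ\E_M$ restricted to $Mz$ is a positive tracial linear functional taking the value $\omega(\E_M(z))=\omega(z)$ on the unit $z$ of $Mz$. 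If $\omega(z)>0$, normalizing yields a tracial state on the properly infinite $Mz$, which is impossible; hence $\omega(z)=0$ for every state $\omega$ on $M'$, forcing $z=0$, a contradiction. So $M$ is finite. The identification $\E_M|_M=\tau_Z$ (the center-valued trace) then follows from Dixmier's approximation theorem: $\E_M$ is constant on $\{uxu^*:u\in U_M\}$ (by conjugation invariance) and norm-continuous, hence constant on the norm-closed convex hull, which contains $\tau_Z(x)\in Z(M)\subset M'$; the latter is fixed by $\E_M$, giving $\E_M(x)=\tau_Z(x)$.

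The main obstacle is the converse, $M$ finite $\Rightarrow$ WAP. I would apply Grothendieck's double-limit criterion to $f=\xi\star x\star\eta$: given sequences $(g_n),(h_m)\subset U_M$ for which both iterated limits of $\langle h_m x h_m^* g_n^*\xi|g_n^*\eta\rangle$ exist, use separability of $\Hh$ to pass to subsequences with $g_n^*\xi\to\xi_0$ and $g_n^*\eta\to\eta_0$ weakly in $\Hh$, and $h_m x h_m^*\to y$ in WOT with $y\in M$. Taking $m\to\infty$ first readily gives $\lim_n\langle y g_n^*\xi|g_n^*\eta\rangle$, but the reversed order involves a weak-weak pairing of vectors across a fixed bounded operator, which is not automatically continuous. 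Finiteness must enter here, most naturally through the normal, conjugation-invariant center-valued trace (giving $\tau_Z(y)=\tau_Z(x)$) together with Dixmier averaging to constrain the WOT-closure of the orbit $\{hxh^*:h\in U_M\}$; reconciling the two iterated limits so that they coincide is the delicate technical step.
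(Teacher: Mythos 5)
Your first half is essentially sound. The extension of the WAP property from $M$ to $C^*(M,M')$ via $\xi\star(xy')\star\eta=(y'\xi)\star x\star\eta$ and norm-closedness, the definition $\langle \E_M(T)\xi|\eta\rangle=\mf(\xi\star T\star\eta)$, the invariance properties, $\E_M|_{M'}=\mathrm{id}$, and the identification $\E_M|_M=\ctr_M$ by Dixmier averaging all match the paper's argument. Your route to finiteness differs slightly: the paper feeds the positive, unital, tracial map $\E_M|_M:M\to Z(M)$ directly into Dixmier's characterization of finite von Neumann algebras, whereas you compose with a state $\omega$ on $M'$ and use the nonexistence of tracial states on a properly infinite central summand $Mz$; this is a legitimate variant (note $\E_M(z)=z$ since $z\in Z(M)\subset M'$, and one may choose $\omega$ with $\omega(z)=1$). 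You should, however, record explicitly that $\E_M$ is positive — this follows from $\xi\star T\star\xi(u)=\langle Tu^*\xi|u^*\xi\rangle\geq 0$ for $T\geq 0$ together with positivity of $\mf$ — since both the state argument and the Dixmier-averaging step rely on it.

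The genuine gap is the converse, finite $\Rightarrow$ WAP, which is the substantive half of the theorem and which you acknowledge you cannot close. Your setup (weak limits of $g_n^*\xi$, $g_n^*\eta$ in $\Hh$ and a WOT limit of $h_mxh_m^*$) stalls exactly where you say: the sesquilinear pairing is not jointly weak--weak continuous, and neither the identity $\ctr_M(h_mxh_m^*)=\ctr_M(x)$ nor Dixmier averaging constrains the WOT-closure of the unitary orbit enough to equate the two iterated limits. The missing idea in the paper is to move the compactness from vectors/operators to \emph{maps on $M$}: write $(\xi\star\theta_j(x)\star\eta)(u_i)=\langle\psi_i(\theta_j(x))\xi|\eta\rangle$ with $\psi_i=\Ad(u_i)$, $\theta_j\in\Int(M)$, and use the fact — extracted from the proof of Takesaki's Theorem V.2.4, i.e.\ the $G$-finiteness of finite von Neumann algebras — that the weak$^*$ closure of $\Int(M)$ in $B(M)$ lies in $B_*(M)$, so that $\Int(M)$, being bounded, is relatively weak$^*$ compact in $B_*(M)$. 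Passing to subsequences, $\psi_i\to\psi$ and $\theta_j\to\theta$ weak$^*$ with $\psi,\theta$ \emph{normal}; normality of each $\psi_i$ gives $\lim_j\langle\psi_i(\theta_j(x))\xi|\eta\rangle=\langle\psi_i(\theta(x))\xi|\eta\rangle$, hence the first iterated limit is $\langle\psi(\theta(x))\xi|\eta\rangle$, while normality of the limit map $\psi$ gives the same value for the reversed order, verifying Grothendieck's criterion. Without this relative weak$^*$ compactness of $\Int(M)$ inside the normal maps, the limit exchange you need has no justification, so as written the proposal proves only one direction of the equivalence.
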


The previous theorem relies on the study of the space $\wap_\Hh(M)$ of operators on $\Hh$ whose all coefficient functions are weakly almost periodic on $U_M$. The next section, which deals with an arbitrary von Neumann algebra $M\subset B(\Hh)$, is devoted to the following result and some consequences. See Theorem \ref{2.2} for a more detailed version.

\begin{theorem}
The set $\wap_\Hh(M)$ is a norm-closed, unital, selfadjoint subspace of $B(\Hh)$ which contains the commutant $M'$ of $M$ and the ideal $K(\Hh)$ of compact operators, and it is an $M'$-bimodule. In particular, it is spanned by its positive elements.
 Moreover, there exists a linear, positive, unital map $\E:\wap_\Hh(M)\rightarrow M'$ such that:
\begin{enumerate}
\item [(1)] For all $T\in \wap_\Hh(M)$ and $x',y'\in M'$, one has $\E(x'Ty')=x'\E(T)y'$.
\item [(2)] For every $T\in \wap_\Hh(M)$ and every $v\in U_M$, the operator $vTv^*$ belongs to $\wap_\Hh(M)$ and $\E(vTv^*)=\E(T)$.
\end{enumerate}
\end{theorem}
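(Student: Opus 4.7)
The plan is to derive the structural properties of $\wap_\Hh(M)$ directly from manipulations of coefficient functions, and then to construct the expectation $\E$ by pairing these coefficient functions with the canonical bi-invariant mean $\mf$ on $\WAP(U_M)$ from the introduction.

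The closure properties follow from a few elementary observations. Linearity of $T\mapsto\xi\star T\star\eta$ gives that $\wap_\Hh(M)$ is a linear subspace; the identity $\xi\star I\star\eta(u)=\langle\xi|\eta\rangle$ (a constant function) gives unitality; the relation $\xi\star T^*\star\eta=\overline{\eta\star T\star\xi}$ gives stability under involution (since $\WAP(U_M)$ is conjugation-closed); and the bound $\|\xi\star T\star\eta\|_\infty\le\|T\|\|\xi\|\|\eta\|$ gives norm-closedness. For $x'\in M'$, the commutation $x'u=ux'$ yields $ux'u^*=x'$, so the coefficient function is constant and $M'\subset\wap_\Hh(M)$. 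The same commutations with elements of $M'$ deliver the identity
\[
\xi\star(x'Ty')\star\eta=(y'\xi)\star T\star({x'}^*\eta)\qquad(x',y'\in M',\ T\in\wap_\Hh(M)),
\]
establishing the $M'$-bimodule property. Finally, a selfadjoint $T\in\wap_\Hh(M)$ decomposes as $T=\tfrac12(\|T\|I+T)-\tfrac12(\|T\|I-T)$, a difference of positives in $\wap_\Hh(M)$, so the space is spanned by its positive elements.

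For $K(\Hh)\subset\wap_\Hh(M)$ I would first handle a rank-one operator $T=|\zeta\rangle\langle\chi|$, for which a direct computation gives
\[
\xi\star T\star\eta(u)=\langle\xi|u\chi\rangle\,\langle u\zeta|\eta\rangle.
\]
This is a pointwise product of two matrix coefficients of the strongly continuous unitary representation $u\mapsto u$ of $U_M$ on $\Hh$. Matrix coefficients of unitary representations are classical examples of weakly almost periodic functions (the orbit of any vector sits in a closed norm ball of $\Hh$, which is weakly compact, and the coefficient map transfers this weak compactness to $C_b(U_M)$), and $\WAP(U_M)$ is an algebra, so the product lies in $\WAP(U_M)$. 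Linearity then covers all finite-rank operators, and the norm-closedness proved above delivers all of $K(\Hh)$.

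To construct $\E$, set $\langle\E(T)\xi|\eta\rangle:=\mf(\xi\star T\star\eta)$. The right-hand side is sesquilinear in $(\xi,\eta)$ and bounded by $\|T\|\|\xi\|\|\eta\|$, so Riesz representation produces a unique $\E(T)\in B(\Hh)$ with $\|\E(T)\|\le\|T\|$; linearity and unitality ($\E(I)=I$) are immediate, and positivity uses that $\mf$ is a state and that $\xi\star T\star\xi\ge 0$ when $T\ge 0$. Property (2) follows from $\xi\star(vTv^*)\star\eta(u)=(\xi\star T\star\eta)(uv)$ together with right-invariance of $\mf$. Similarly, $\xi\star T\star\eta(vu)=(v^*\xi)\star T\star(v^*\eta)(u)$ combined with left-invariance of $\mf$ yields $v\E(T)v^*=\E(T)$ for every $v\in U_M$, whence $\E(T)\in M'$. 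Property (1) is then the bimodule identity fed through $\mf$. The main obstacle I anticipate is the $K(\Hh)$ inclusion, where one must recognize the conjugation coefficient as a product of two coefficients of the defining representation and invoke the fact that matrix coefficients of unitary representations are WAP; the rest is bookkeeping with the two invariance properties of $\mf$.
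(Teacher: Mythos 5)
Your proposal is correct and follows essentially the same route as the paper: the structural properties come from the same coefficient-function identities, the inclusion $K(\Hh)\subset\wap_\Hh(M)$ is reduced to rank-one operators whose coefficient functions are products of matrix coefficients $u\mapsto\langle u\zeta|\eta\rangle$, and $\E$ is obtained by pairing coefficient functions with the invariant mean $\mf$ through a bounded sesquilinear form, with properties (1) and (2) read off from the translation formulas. The only compressed step is the weak almost periodicity of the matrix coefficients themselves, where the paper spells out the weak-to-weak continuity of $\eta\mapsto\f_{\xi,\eta}$ via an auxiliary operator; your appeal to the classical fact about coefficients of unitary representations, with the same weak-compactness-of-the-ball sketch, covers exactly that point.
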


As will be seen in \S 2, the existence of $\E$ comes from the unique bi-invariant mean $\mf$ on $\WAP(U_M)$.

\begin{remark}
There is a priori no reason that $\wap_\Hh(M)$ be a $C^*$-algebra. However, the case of $\wap_\Hh(B(\Hh))$ is completely described as the following result shows.
\end{remark}

\begin{theorem}\label{1.4}
The operator system $\wap_\Hh(B(\Hh))$ is equal to
$K(\Hh)+\C$.
\end{theorem}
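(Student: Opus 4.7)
The containment $K(\Hh) + \C I \subseteq \wap_\Hh(B(\Hh))$ is immediate from the preceding theorem: both $B(\Hh)' = \C I$ and $K(\Hh)$ lie in $\wap_\Hh(B(\Hh))$, which is a linear subspace of $B(\Hh)$.

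For the reverse inclusion I plan to argue by contrapositive: given $T \notin K(\Hh) + \C I$, exhibit $\xi \in \Hh$ and sequences of unitaries $(u_n), (v_m)$ so that $\xi \star T \star \xi$ violates Grothendieck's iterated-limit criterion (Proposition~\ref{criterewap}). Since $\wap_\Hh(B(\Hh))$ is selfadjoint and $K(\Hh) + \C I$ is selfadjoint, replacing $T$ by its real or imaginary part reduces the problem to the case $T = T^*$. The assumption then forces $\sigma_{\mathrm{ess}}(T)$ to contain two distinct points, and after an affine normalisation I may assume $\{0, 1\} \subseteq \sigma_{\mathrm{ess}}(T)$.

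For a small $\ep > 0$ the spectral projections $E_1 := E_T([-\ep, \ep])$ and $E_2 := E_T([1-\ep, 1+\ep])$ are infinite-dimensional, reduce $T$, and satisfy $\|T|_V - P\| \le \ep$ on $V := E_1\Hh \oplus E_2\Hh$, where $P$ denotes the projection onto $E_2\Hh$. To import the classical Hardy-space obstruction, I enumerate an orthonormal basis $(e_n)_{n \in \mathbb{Z}}$ of $V$ with $\{e_n : n \ge 0\}$ spanning $E_2\Hh$ and $\{e_n : n < 0\}$ spanning $E_1\Hh$, and then define the bilateral shift $Se_n := e_{n+1}$ and the flip $\tau e_n := e_{-n}$ on $V$, each extended by the identity on $V^\perp$. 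Setting $u_n := S^n$, $v_m := S^{-m}\tau$, and $f(u) := \langle uTu^*e_0 \mid e_0\rangle$, a short computation yields $(u_n v_m)e_k = e_{(n-m)-k}$ on $V$, whence $\langle (u_n v_m) P (u_n v_m)^* e_0 \mid e_0\rangle$ equals $1$ when $n \ge m$ and $0$ otherwise.

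Since $u_n v_m$ preserves $V$ and $e_0 \in V$, the perturbation estimate $\|T|_V - P\| \le \ep$ gives
\[
\bigl| f(u_n v_m) - \mathbf{1}_{\{n \ge m\}} \bigr| \le \ep.
\]
For $\ep < 1/3$ the iterated limits $\lim_n \lim_m f(u_n v_m)$ and $\lim_m \lim_n f(u_n v_m)$ differ by at least $1 - 2\ep > 0$, so $f \notin \WAP(U_{B(\Hh)})$ and $T \notin \wap_\Hh(B(\Hh))$. The step I expect to need the most care is the basis setup on $V$: it must be chosen so that the shift-and-flip unitaries reproduce the exact Hardy-projection double-limit discrepancy, so that the residual $\ep$-perturbation $T|_V - P$ is harmlessly absorbed by the estimate above.
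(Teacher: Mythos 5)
Your proposal is correct in substance, but it reaches the hard inclusion $\wap_\Hh(B(\Hh))\subseteq K(\Hh)+\C$ by a genuinely different route than the paper. The paper first invokes Weyl's theorem to write any selfadjoint operator as diagonal plus compact, and then runs a case analysis on a diagonal $D$ (at most one eigenvalue with infinite-dimensional eigenspace, at most one accumulation point of eigenvalues), each case being killed by the double-limit criterion with explicit tail-swap and transposition unitaries. You instead pass to $T=T^*$ via real and imaginary parts, observe that $T\notin K(\Hh)+\C$ forces the essential spectrum to contain two points, and use the two infinite-dimensional spectral projections to reduce, up to an $\ep$-perturbation in norm, to the Hardy-type picture of a projection conjugated by powers of a bilateral shift composed with a flip; your computation $(u_nv_m)e_k=e_{(n-m)-k}$ and the resulting values $\mathbf{1}_{\{n\geq m\}}\pm\ep$ are correct, and the perturbation is legitimately absorbed because $u_nv_m$ and $T$ both preserve $V$. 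What your approach buys is uniformity (one construction handles both of the paper's cases, i.e.\ repeated eigenvalues and accumulation points, without Weyl's decomposition); what it costs is the standard but unproved input that a selfadjoint operator with one-point essential spectrum is a compact perturbation of a scalar, which plays the role Weyl's theorem plays in the paper. One small repair is needed at the end: since your values are only within $\ep$ of $0$ or $1$, the iterated limits $\lim_n\lim_m f(u_nv_m)$ and $\lim_m\lim_n f(u_nv_m)$ need not exist as written, whereas Proposition~\ref{criterewap} is violated only by sequences for which both iterated limits exist; this is fixed by a routine diagonal extraction of subsequences (the double sequence is bounded), after which the two iterated limits lie in $[-\ep,\ep]$ and $[1-\ep,1+\ep]$ respectively and hence differ, exactly as you intend.
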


The penultimate section is devoted to the case of operators whose coefficient functions are almost periodic functions on $U_M$: recall that $f\in \cbr(G)$ is \textit{almost periodic} if the orbits $Gf$ and $fG$ are relatively compact for the norm topology. We denote by $\AP(G)$ the subset (in fact the $C^*$-subalgebra) of $\WAP(G)$ of all almost periodic functions on $G$. Analogously, we denote by $\ap_\Hh(M)$ the set of all operators $T\in B(\Hh)$ whose all coefficient functions are almost periodic. Then we prove the following result which relies on Theorem \ref{1.1}.

\begin{theorem}\label{1.5}
Let $M\subset B(\Hh)$ be a von Neumann algebra. Then $M$ is contained in $\ap_\Hh(M)$ if and only if $M$ is a direct sum $A\oplus\bigoplus_{k\geq 1} M_k$ where $A$ is an abelian, diffuse von Neumann algebra, and where each $M_k$ is a finite-dimensional factor. In other words, $M\subset \ap_\Hh(M)$ if and only if $M$ is a direct sum of von Neumann algebras each of which is either abelian or finite-dimensional.
\end{theorem}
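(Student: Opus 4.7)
The plan is to prove the two implications separately.

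For the sufficiency, assume $M = A\oplus\bigoplus_{k\geq 1} M_k$ with $A$ diffuse abelian and each $M_k$ a finite-dimensional factor. Since $\AP(U_M)$ is norm-closed in $C_b(U_M)$ and for fixed $\xi,\eta$ the map $x\mapsto \xi\star x\star\eta$ is norm-continuous from $B(\Hh)$ to $C_b(U_M)$, the set $\ap_\Hh(M)$ is norm-closed in $B(\Hh)$. It therefore suffices to treat each summand. For $x\in A$, writing $u=u_A+\sum_k u_k\in U_M$ gives $uxu^* = u_A x u_A^* = x$ by commutativity of $A$, so $\xi\star x\star\eta$ is constant and thus almost periodic. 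For $x\in M_k$, the projection $\pi_k:U_M\to U_{M_k}$, $u\mapsto u_k$, is a continuous homomorphism into the compact group $U_{M_k}$ (compact since $\dim M_k<\infty$), and $\xi\star x\star\eta = h\circ\pi_k$ for $h(v)=\langle vxv^*\xi|\eta\rangle$ continuous on $U_{M_k}$, hence almost periodic there. Since the pullback of an almost periodic function by a continuous homomorphism into a compact group remains almost periodic, we conclude.

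For the necessity, assume $M\subset\ap_\Hh(M)$. Since $\AP(U_M)\subset\WAP(U_M)$, Theorem \ref{1.1} immediately gives that $M$ is finite. The crucial structural step is to show that every diffuse direct summand of $M$ is abelian, for which I invoke the result (established earlier in the article) that the unitary group of a diffuse von Neumann algebra is minimally almost periodic, so every almost periodic function on it is constant. Let $z\in Z(M)$ be a central projection with $Mz$ diffuse, and embed $U_{Mz}$ into $U_M$ via the continuous homomorphism $v\mapsto v+(1-z)$. For $x\in Mz$, a direct computation (using $xz=zx=x$) shows that the restriction of the almost periodic function $\xi\star x\star\eta$ to $U_{Mz}$ equals $v\mapsto\langle vxv^*z\xi|z\eta\rangle$, which is therefore almost periodic on $U_{Mz}$, hence constant. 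Letting $z\xi,z\eta$ range over $z\Hh$ forces $vxv^*=x$ for every $v\in U_{Mz}$, so $x\in Z(Mz)$; since $x$ was arbitrary, $Mz$ is abelian.

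To conclude, decompose the center $Z(M) = Z_d\oplus Z_a$ into its diffuse and atomic parts with central projections $z_d$ and $z_a=\sum_k p_k$ (the $p_k$ being the atoms of $Z_a$). Then $Mz_a=\bigoplus_k Mp_k$, each $Mp_k$ a finite factor, hence either a type $I_{n_k}$ factor with $n_k<\infty$ (so $Mp_k\cong M_{n_k}(\C)$) or a type $II_1$ factor; the latter would be diffuse and thus abelian by the previous step, which is impossible, so $Mz_a=\bigoplus_k M_{n_k}(\C)$. On the other side, a minimal projection $p\in Mz_d$ would satisfy $Z_d p=\C p$ and produce an atom in $Z_d$, contradicting its diffuseness; hence $Mz_d$ is diffuse, abelian by the previous step, and equal to $Z_d$. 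This yields $M=Z_d\oplus\bigoplus_k M_{n_k}(\C)$, the desired form with $A=Z_d$. The main obstacle is the middle paragraph: one must verify cleanly that restrictions of almost periodic functions to closed subgroups remain almost periodic, identify the restriction as the correct coefficient function on $z\Hh$, and then leverage minimal almost periodicity to upgrade ``almost periodic'' to ``constant''.
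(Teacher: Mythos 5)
Your necessity argument is correct and essentially the paper's route: finiteness from Theorem \ref{3.1} (since $\AP\subset\WAP$), then minimal almost periodicity of the unitary group of a diffuse algebra (Theorem \ref{map}) together with Theorem \ref{apgroup}, applied through the corner $U_{Mz}\ni v\mapsto v+(1-z)\in U_M$, and finally the split of the centre into diffuse and atomic parts (the paper instead invokes Lemma \ref{diffuse} directly, but this is only bookkeeping). The genuine gap is in the sufficiency direction, at the sentence ``It therefore suffices to treat each summand.'' Norm-closedness of $\ap_\Hh(M)$ only gives that the \emph{norm} closure of the linear span of $A$ and the $M_k$'s lies in $\ap_\Hh(M)$; but when there are infinitely many summands, $M$ is the $\ell^\infty$-type direct sum, and a general element $x=x_0\oplus\bigoplus_{k\geq 1}x_k$ is not a norm limit of its finite truncations: $\Vert x-(x_0\oplus x_1\oplus\cdots\oplus x_N\oplus 0\oplus\cdots)\Vert=\sup_{k>N}\Vert x_k\Vert$, which does not tend to $0$ when, say, all $\Vert x_k\Vert=1$ (already $x=1$ is missed). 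So your per-summand verifications, which are fine, do not by themselves cover all of $M$.

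The repair is to approximate at the level of the coefficient function rather than of the operator, which is exactly what the paper does. Fix $\xi=\bigoplus_k\xi_k$ and $\eta=\bigoplus_k\eta_k$; then for $u=\bigoplus_k u_k\in U_M$ one has
\[
\xi\star x\star\eta(u)=\langle x_0\xi_0|\eta_0\rangle+\sum_{k\geq 1}\langle u_kx_ku_k^*\xi_k|\eta_k\rangle,
\]
and the tail satisfies $\bigl|\sum_{k>N}\langle u_kx_ku_k^*\xi_k|\eta_k\rangle\bigr|\leq\Vert x\Vert\sum_{k>N}\Vert\xi_k\Vert\,\Vert\eta_k\Vert\to 0$ uniformly in $u$, by Cauchy--Schwarz and square-summability of $(\Vert\xi_k\Vert)$ and $(\Vert\eta_k\Vert)$. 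Each partial sum is almost periodic on $U_M$ by exactly your arguments (constant contribution from the abelian part, pullback from the compact group $\prod_{k\leq N}U_{M_k}$ for the rest), and $\AP(U_M)$ is closed in the uniform norm, so $\xi\star x\star\eta\in\AP(U_M)$. With this substitution your proof coincides in substance with the paper's.
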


A von Neumann algebra which is a direct sum of von Neumann algebras each of which is either abelian or finite-dimensional is a \textit{strongly finite} von Neumann algebra in the sense of \cite{GreenLau} where the authors prove that these algebras are characterized by the relative compactness in the norm topology of the orbits $\{\f\circ\alpha\colon \alpha\in\Int(M)\}$ for all normal linear functionals $\f\in M_*$. We are grateful to P. de la Harpe for having indicated that reference.

Theorem \ref{1.5} rests on the following theorem. See Theorem \ref{map} for a more precise statement.

\begin{theorem}
Let $M$ be a diffuse von Neumann algebra. Then its unitary group $U_M$ is minimally almost periodic, i.e. the only continuous, finite-dimensional, irreducible, unitary representation of $U_M$ is the trivial representation.
\end{theorem}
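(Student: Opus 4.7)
The plan is to first prove the theorem in the case where $M$ is diffuse abelian, then reduce the general case to it by exhibiting, for every $u\in U_M$, a diffuse abelian von Neumann subalgebra of $M$ containing $u$. Since every finite-dimensional unitary representation decomposes into irreducibles, it suffices to show that any continuous homomorphism $\pi:U_M\to U(n)$ is trivial.

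Assume first $M=A$ is diffuse abelian; by separability of $\Hh$ one identifies $A$ with $L^\infty([0,1])$ (Lebesgue), so that $U_A=L^0([0,1],\T)$ and the WOT on $U_A$ coincides with the topology of convergence in measure. Because $U(n)$ is a Lie group, there is a neighborhood $V$ of $1$ in $U(n)$ containing no nontrivial subgroup. For each measurable $E\subset[0,1]$, let
\[
H_E=\{v\in U_A:v=1\text{ off }E\},
\]
a subgroup of $U_A$. For any $v\in H_E$ and $\xi\in L^2$,
\[
\|(v-I)\xi\|_2^2=\int_E|v-1|^2|\xi|^2\,d\mu\le 4\int_E|\xi|^2\,d\mu,
\]
and by absolute continuity of $|\xi|^2\,d\mu$ with respect to $\mu$, this tends to $0$ uniformly in $v\in H_E$ as $\mu(E)\to 0$. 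Hence for $\mu(E)$ small enough, $\pi(H_E)\subset V$; since $\pi(H_E)$ is a subgroup of $U(n)$ contained in $V$, it must be $\{1\}$. Given an arbitrary $u\in U_A$, choose a measurable partition $[0,1]=E_1\sqcup\cdots\sqcup E_k$ with each $\mu(E_j)$ below this threshold, and set $u_j=u\chi_{E_j}+\chi_{[0,1]\setminus E_j}$. The $u_j$ mutually commute, lie in $H_{E_j}$, and multiply to $u$, so $\pi(u)=\prod_j\pi(u_j)=1$.

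For general diffuse $M$, fix $u\in U_M$ and decompose $W^*(u)=B\oplus C$ into its diffuse and atomic parts, writing $C=\bigoplus_i\C p_i$ with $\{p_i\}$ the minimal projections of $C$ (either summand may be absent). Each corner $p_iMp_i$ is diffuse since diffuseness passes to corners, so it contains a diffuse MASA $A_i$, and $A:=B\oplus\bigoplus_i A_i\subset M$ is an abelian diffuse von Neumann subalgebra containing $u$. As the WOT on $U_A$ is the subspace topology inherited from $U_M$, the restriction $\pi|_{U_A}$ is continuous, and the abelian case forces $\pi(u)=1$. The main obstacle is the uniform smallness of $\|(v-I)\xi\|_2$ for $v\in H_E$ as $\mu(E)\to 0$, which rests on absolute continuity; once this is in hand, the rest of the argument is pure group-theoretic packaging.
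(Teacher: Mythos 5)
Your proof is correct, and its overall architecture is the same as the paper's: settle the abelian diffuse case first, then reduce the general diffuse case by placing each unitary $u\in U_M$ inside an abelian diffuse von Neumann subalgebra whose unitary group sits continuously inside $U_M$. The two steps are, however, carried out by genuinely different means. For the abelian case the paper only has to kill characters $\chi:U_A\to\T$ (finite-dimensional representations of the abelian group split into characters): it uses the norm-density of the unitaries $\exp(i2k\pi p/m)$ with $p\in\Pp(A)$, splits $p$ into projections $q_j$ with $\Vert\exp(i2k\pi q_j/m)-1\Vert_2$ small, and invokes the rigidity of $m$-th roots of unity near $1$. You instead treat homomorphisms into $U(n)$ directly, replacing roots-of-unity rigidity by the no-small-subgroups property of the Lie group $U(n)$, and replacing the splitting of a projection by the factorization $u=\prod_j u_j$ with each $u_j$ supported on a set $E_j$ of small measure, so that each subgroup $H_{E_j}$ lands in a neighborhood containing no nontrivial subgroup. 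Both arguments exploit the same feature of diffuseness (arbitrarily fine partitions of the unit); yours avoids the reduction to characters and the spectral-theoretic density step at the cost of the no-small-subgroups lemma, and it tacitly uses that the topology of $U_A$ is intrinsic (on bounded sets the weak and ultraweak topologies agree), so that it may legitimately be computed as convergence in measure in the standard representation of $L^\infty[0,1]$ — that deserves one explicit sentence. For the reduction step, the paper simply extends $u$ to a maximal abelian subalgebra $A(u)\subset M$, which is diffuse by maximality; your route through the diffuse/atomic decomposition of $W^*(u)$ and diffuse MASAs of the corners $p_iMp_i$ is correct but more elaborate than necessary, and it ultimately rests on the same key fact, namely that a MASA of a diffuse von Neumann algebra is diffuse.
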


As explained previously, the last section is an appendix which is devoted to remind the reader of properties of weakly almost periodic functions on an arbitrary topological group $G$, and mostly the existence of the bi-invariant mean $\mf$ on $\WAP(G)$, which plays a central role in this article, as already mentioned.

\par\vspace{3mm}
\textit{Acknowledgements.} I am very grateful to P. de la Harpe for his numerous comments, questions, suggestions and references in the last versions of this article, and to the referee for his/her valuable comments.

\section{The operator system $\wap_\Hh(M)$}

Let $\Hh$ be a separable Hilbert space and let $M\subset B(\Hh)$ be a von Neumann algebra acting on $\Hh$. 

The next lemma contains general properties of coefficient functions, and in particular formulas for
the left and right translates by elements of $U_M$. Its proof uses straighforward computations which are left to the reader.

\begin{lemma}\label{1.2}
Let $T\in B(\Hh)$, $\xi,\eta\in\Hh$ and $v\in U_M$. Then the following formulas hold:
\begin{equation}\label{star}
\xi\star T^*\star\eta=\overline{\eta\star T\star\xi},
\end{equation}
\begin{equation}\label{gauche}
v\cdot(\xi\star T\star \eta)=(v\xi)\star T\star (v\eta)
\end{equation}
and
\begin{equation}\label{droite}
(\xi\star T\star \eta)\cdot v=\xi\star (vTv^*)\star \eta.
\end{equation}
Let furthermore $x',y'\in M'$. Then 
\begin{equation}\label{commutant}
\xi\star (x'Ty')\star\eta=(y'\xi)\star T\star(x'^*\eta).
\end{equation}
\end{lemma}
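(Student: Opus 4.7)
The plan is to verify each of the four identities by evaluating both sides at an arbitrary $u\in U_M$ and reducing everything to the definition $\xi\star T\star\eta(u)=\langle uTu^*\xi|\eta\rangle$. The only ingredients needed are the unitarity of $u$ and $v$ (so that $u^{-1}=u^*$ and $v^{-1}=v^*$), the sesquilinearity of the inner product, the adjoint identity $\langle A\zeta|\theta\rangle=\overline{\langle A^*\theta|\zeta\rangle}$, and the commutation relation $x'u=ux'$ valid for all $x'\in M'$ and $u\in U_M$.

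For \eqref{star} I would apply the adjoint identity with $A=uT^*u^*$, observing that $(uT^*u^*)^*=uTu^*$; this converts $\langle uT^*u^*\xi|\eta\rangle$ into $\overline{\langle uTu^*\eta|\xi\rangle}$, which is precisely $\overline{\eta\star T\star\xi(u)}$. For \eqref{gauche} I would expand
\[
(v\cdot(\xi\star T\star\eta))(u)=(\xi\star T\star\eta)(v^{-1}u)=\langle v^*uTu^*v\,\xi|\eta\rangle
\]
and then move $v^*$ across the pairing to obtain $\langle uTu^*(v\xi)|v\eta\rangle$, which is the right-hand side. For \eqref{droite} the key observation is simply $(uv)T(uv)^*=u(vTv^*)u^*$, so
\[
((\xi\star T\star\eta)\cdot v)(u)=\langle (uv)T(uv)^*\xi|\eta\rangle=\xi\star(vTv^*)\star\eta(u).
\]

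For \eqref{commutant} the decisive step is that $u\in U_M$ commutes with $x',y'\in M'$, whence $u(x'Ty')u^*=x'(uTu^*)y'$; pulling $x'$ across the inner product to its adjoint and absorbing $y'$ into the first entry yields $\langle uTu^*(y'\xi)|x'^*\eta\rangle$, which is exactly $(y'\xi)\star T\star(x'^*\eta)(u)$.

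I do not anticipate any substantive obstacle: the entire lemma is a routine exercise in manipulating coefficient functions. The only small point worth flagging is the distinction between $v^{-1}$ and $v^*$ appearing in the definition of the left translate; these coincide here because we restrict to unitaries, and this is what makes the $v$ on the left of the inner product in \eqref{gauche} turn cleanly into a $v$ on the right.
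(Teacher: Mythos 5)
Your verification is correct and is exactly the routine computation the paper has in mind (the paper omits the proof, stating it is a straightforward calculation left to the reader): each identity follows by evaluating at $u$, using unitarity of $u,v$, the adjoint relation for the inner product, and, for \eqref{commutant}, the commutation of $u\in M$ with $x',y'\in M'$. No gaps.
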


Our next lemma is essentially \cite[Lemme 1]{HarpeMoy}, but we provide a proof for the sake of completeness.

\begin{lemma}\label{1.3}
Let $T\in B(\Hh)$ and $\xi,\eta\in\Hh$. Then $\xi\star T\star \eta\in \cbr(U_M)$.
Moreover, $T\in M'$ if and only if all its associated coefficient functions are constant.

Furthermore, let $\f_{\xi,\eta}:U_M\rightarrow \C$ be defined by
\[
\f_{\xi,\eta}(u)=\langle u\xi|\eta\rangle\quad (u\in U_M).
\]
Then $\f_{\xi,\eta}\in \cbr(U_M)$.
\end{lemma}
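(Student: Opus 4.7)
The plan is to prove the three assertions in turn, relying on Lemma \ref{1.2} and on the fact that the topology on $U_M$ coincides with (the restriction to $U_M$ of) the strong operator topology, so that $v_\alpha \to 1$ in $U_M$ entails $\Vert v_\alpha\zeta-\zeta\Vert\to 0$ for every $\zeta\in\Hh$.

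For the first assertion, I would use formula (\ref{gauche}) to write
\[
(g\cdot(\xi\star T\star\eta))(u)-(\xi\star T\star\eta)(u)=\langle uTu^*(g\xi)\mid g\eta\rangle-\langle uTu^*\xi\mid\eta\rangle,
\]
then insert the intermediate term $\langle uTu^*(g\xi)\mid\eta\rangle$ to split the right-hand side into two pieces. Since $u$ is unitary, $\Vert uTu^*\Vert=\Vert T\Vert$ and $\Vert g\xi\Vert=\Vert\xi\Vert$, so the Cauchy--Schwarz inequality yields the uniform bound
\[
\Vert g\cdot(\xi\star T\star\eta)-\xi\star T\star\eta\Vert_\infty\leq \Vert T\Vert\bigl(\Vert\xi\Vert\,\Vert g\eta-\eta\Vert+\Vert g\xi-\xi\Vert\,\Vert\eta\Vert\bigr),
\]
and the right-hand side tends to $0$ as $g\to 1$ in $U_M$. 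This gives membership in $\cbr(U_M)$.

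For the characterization of $M'$, the forward direction is immediate: if $T\in M'$, then $uTu^*=T$ for every $u\in U_M$, so $(\xi\star T\star\eta)(u)=\langle T\xi\mid\eta\rangle$ is constant. Conversely, if every coefficient function is constant, evaluating at $u=1$ forces $\langle uTu^*\xi\mid\eta\rangle=\langle T\xi\mid\eta\rangle$ for all $u\in U_M$ and $\xi,\eta\in\Hh$, hence $uT=Tu$ for every unitary $u\in M$. Since $M$ is the norm-closed linear span of $U_M$, it follows that $T\in M'$.

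Finally, for $\f_{\xi,\eta}$, a direct computation gives $(g\cdot\f_{\xi,\eta})(u)=\langle g^{-1}u\xi\mid\eta\rangle=\langle u\xi\mid g\eta\rangle$, so
\[
\Vert g\cdot\f_{\xi,\eta}-\f_{\xi,\eta}\Vert_\infty\leq \Vert\xi\Vert\,\Vert g\eta-\eta\Vert\longrightarrow 0
\]
as $g\to 1$. The whole argument is routine once Lemma \ref{1.2} and the identification of the topology on $U_M$ with the strong operator topology are in hand; the only step requiring a word of care is the converse in the second assertion, where one must invoke that unitaries linearly span $M$ to conclude $T\in M'$ from commutation with all unitaries.
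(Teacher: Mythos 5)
Your proof is correct and follows essentially the same route as the paper: the same insertion of an intermediate term and Cauchy--Schwarz bound $\Vert T\Vert(\Vert\xi\Vert\,\Vert g\eta-\eta\Vert+\Vert g\xi-\xi\Vert\,\Vert\eta\Vert)$ for the first assertion, and the same estimate $\Vert\xi\Vert\,\Vert g\eta-\eta\Vert$ for $\f_{\xi,\eta}$. The characterization of $M'$, which the paper dismisses as obvious, is correctly filled in by your appeal to the fact that the unitaries span $M$.
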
 
\begin{proof}
Let us set $\f:=\xi\star T\star \eta$ for short. Then one has, for every $u\in U_M$, by equality (\ref{gauche}):
\begin{align*}
|(v\cdot \f)(u)-\f(u)|
&=
|\langle uTu^*v\xi|v\eta\rangle-\langle uTu^*\xi|\eta\rangle|\\
&\leq
|\langle uTu^*(v\xi-\xi)|v\eta\rangle|+|\langle uTu^*\xi|v\eta-\eta\rangle|\\
&\leq
\Vert T\Vert\Vert v\xi-\xi\Vert\Vert\eta\Vert+\Vert T\Vert\Vert\xi\Vert\Vert v\eta-\eta\Vert.
\end{align*}
Hence
\[
\Vert v\cdot\f-\f\Vert_\infty \leq \Vert T\Vert\max(\Vert\xi\Vert,\Vert\eta\Vert)(\Vert v\xi-\xi\Vert+\Vert v\eta-\eta\Vert),
\]
which proves that $\Vert v\cdot\f-\f\Vert_\infty\to 0$ as $v\to 1$ in the strong operator topology.\\
The assertion relative to $M'$ is obvious.
\\
Finally, we have for every $v\in U_M$
\begin{align*}
\Vert v\cdot \f_{\xi,\eta}-\f_{\xi,\eta}\Vert_\infty
&=
\sup_{u\in U_M} |\langle v^*u\xi|\eta\rangle -\langle u\xi|\eta\rangle|\\
&\leq
\Vert \xi\Vert\Vert v\eta-\eta\Vert
\end{align*}
which proves that $\f_{\xi,\eta}\in \cbr(U_M)$.
\end{proof}

\begin{definition}\label{2.1}
A  linear, bounded operator $T\in B(\Hh)$ is \textit{almost periodic with respect to} $M$ (resp. \textit{weakly almost periodic with respect to} $M$) if, for all $\xi,\eta\in\Hh$, the coefficient function $\xi\star  T\star \eta$ belongs to $\AP(U_M)$ (resp. $\WAP(U_M)$). The set of all almost periodic operators with respect to $M$ is denoted by $\ap_\Hh(M)$, and similarly the set of all weakly almost periodic operators with respect to $M$ is denoted by $\wap_\Hh(M)$.
\end{definition}

We focus on $\wap_\Hh(M)$ because of the existence of the unique invariant mean $\mf$ on $\WAP(U_M)$, which implies the existence of a positive, $M'$-bimodular map $\E$ from $\wap_\Hh(M)$ onto $M'$. More precisely, one has the following result.

\begin{theorem}\label{2.2}
The set $\wap_\Hh(M)$ has the following properties: 
\begin{enumerate}
\item [(a)] It is a norm-closed, unital operator system in the sense of \cite[Chapter 2]{Paulsen}: it is a closed, selfadjoint subspace of $B(\Hh)$ which contains $1$, thus it is spanned by its positive elements.
\item [(b)] For every $T\in\wap_\Hh(M)$ and for all $x',y'\in M'$, one has $x'Ty'\in \wap_\Hh(M)$. In particular, $M'\subset \wap_\Hh(M)$.
\item [(c)] The ideal $K(\Hh)$ of all linear, compact operators on $\Hh$ is contained in $\wap_\Hh(M)$.
\end{enumerate}
Furthermore, there exists a linear, bounded and unital map 
\[\E:\wap_\Hh(M)\rightarrow B(\Hh)
\] 
which is characterized by the equality
\begin{equation}\label{defE}
\langle \E(T)\xi|\eta\rangle=\mf(\xi\star T\star \eta)\quad (\xi,\eta\in\Hh),
\end{equation}
and which possesses the following properties:
\begin{enumerate}
\item [(1)] $\E$ is a positive map.
\item [(2)] For every $T\in\wap_\Hh(M)$, one has $\E(T)\in M'$.
\item [(3)] For every $T\in\wap_\Hh(M)$ and all $x',y'\in M'$, one has $\E(x'Ty')=x'\E(T)y'$.  
\item [(4)] For every $T\in\wap_\Hh(M)$, $\E(T)$ belongs to $K_T$, where the latter denotes the weakly closed convex hull of the orbit $\{uTu^* \colon u\in U_M\}$.
\item [(5)] For every $T\in \wap_\Hh(M)$ and every $v\in U_M$, the operator $vTv^*$ belongs to $\wap_\Hh(M)$, and $\E(vTv^*)=\E(T)$.
\item [(6)] For every $C^*$-subalgebra $A$ of $\wap_\Hh(M)$, the restriction of $\E$ to $A$ is completely positive.
\end{enumerate}
\end{theorem}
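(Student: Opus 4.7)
I would verify (a)--(c) by direct computation, then construct $\E$ via Riesz representation applied to $\mf$, and finally deduce properties (1)--(6), with (4) and (6) as the main obstacles. For (a): the estimate $\|\xi\star(T-T_n)\star\eta\|_\infty\le\|T-T_n\|\|\xi\|\|\eta\|$ combined with norm-closedness of $\WAP(U_M)$ gives norm-closedness; selfadjointness follows from (\ref{star}); unitality since $\xi\star I\star\eta$ is the constant $\langle\xi|\eta\rangle$; and positive-span is standard for unital selfadjoint subspaces of $B(\Hh)$ (write $T=\|T\|I-(\|T\|I-T)$). For (b), formula (\ref{commutant}) displays $\xi\star(x'Ty')\star\eta$ as a coefficient function of $T$, and any $T\in M'$ has constant (hence $\WAP$) coefficient functions by Lemma~\ref{1.3}. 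For (c), the coefficient function of a rank-one operator $|\xi_0\rangle\langle\eta_0|$ equals $u\mapsto\langle u\xi_0|\eta\rangle\cdot\overline{\langle u\eta_0|\xi\rangle}$, a product of two matrix coefficients of the strongly continuous unitary representation $U_M\hookrightarrow B(\Hh)$. By Eberlein's classical argument (bounded $U_M$-orbits in $\Hh$ are weakly relatively compact) these matrix coefficients are $\WAP$, hence so is their product, and then (a) gives $K(\Hh)\subset\wap_\Hh(M)$.

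Next, the form $(\xi,\eta)\mapsto\mf(\xi\star T\star\eta)$ is sesquilinear and bounded by $\|T\|$, so Riesz yields a unique contraction $\E(T)\in B(\Hh)$ satisfying (\ref{defE}), and $\mf(1)=1$ gives $\E(I)=I$. Properties (1), (2), (3), (5) then follow from Lemma~\ref{1.2} combined with the properties of $\mf$: (1) from positivity of $\mf$ applied to $\xi\star T\star\xi\ge 0$; (2) from left-invariance of $\mf$ and (\ref{gauche}) applied with $v^*\xi,v^*\eta$ in place of $\xi,\eta$, yielding $v\E(T)v^*=\E(T)$ for every $v\in U_M$; (3) from (\ref{commutant}) and $\mf$-linearity; (5) from right-invariance of $\mf$ combined with (\ref{droite}).

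The main obstacle is (4). I would invoke the Ryll--Nardzewski-based realization of $\mf(f)\cdot 1$ as a point in the norm (hence pointwise) closure of $\co\{f\cdot v:v\in U_M\}$, implicit in the appendix's construction of $\mf$. Assuming $\E(T)\notin K_T$, Hahn--Banach separation in the weak operator topology produces $\xi_1,\ldots,\xi_n,\eta_1,\ldots,\eta_n\in\Hh$ and $c\in\R$ with $\sum_i\langle\E(T)\xi_i|\eta_i\rangle>c\ge\sup_{S\in K_T}\sum_i\langle S\xi_i|\eta_i\rangle$. Setting $f(u):=\sum_i\langle uTu^*\xi_i|\eta_i\rangle\in\WAP(U_M)$, formula (\ref{droite}) shows that a convex combination $\sum_j\lambda_j(f\cdot v_j)$ evaluated at the identity of $U_M$ equals $\sum_i\langle S\xi_i|\eta_i\rangle$ with $S=\sum_j\lambda_j v_jTv_j^*\in\co\{vTv^*\}\subset K_T$; approximating $\mf(f)=\sum_i\langle\E(T)\xi_i|\eta_i\rangle$ by such values yields the desired contradiction. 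Finally for (6), given $[T_{ij}]\in M_n(A)_+$ and $\xi=(\xi_i)\in\Hh^n$, the diagonal unitary $U=\mathrm{diag}(u,\ldots,u)\in B(\Hh^n)$ satisfies $[uT_{ij}u^*]=U[T_{ij}]U^*\ge 0$, so the function $u\mapsto\sum_{i,j}\langle uT_{ij}u^*\xi_j|\xi_i\rangle$ is non-negative and lies in $\WAP(U_M)$; positivity of $\mf$ then yields $\sum_{i,j}\langle\E(T_{ij})\xi_j|\xi_i\rangle\ge 0$, i.e.\ $\E|_A$ is completely positive.
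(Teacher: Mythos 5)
Your proposal is correct, and for (a)--(c), the construction of $\E$, and properties (1)--(5) it follows essentially the paper's route: the same norm estimate for closedness, formula (\ref{commutant}) for the $M'$-bimodule property, the factorization of rank-one coefficient functions as products of matrix coefficients $\f_{\xi,\eta}$ (the paper proves the ``Eberlein fact'' you cite by representing a functional on $\cbr(U_M)$ by an operator $T_\mu$ and deducing weak-weak continuity of $\eta\mapsto\f_{\xi,\eta}$), Riesz representation for $\E$, and the invariance properties of $\mf$ for (1), (2), (3), (5). For (4) you use the same key ingredient as the paper, namely property (d) of Theorem \ref{moy} applied to $f=\sum_j\xi_j\star T\star\eta_j$ and evaluated at $u=1$, so that convex combinations $\sum_i s_i v_iTv_i^*$ approximate $\E(T)$ against finitely many vector functionals; the paper concludes directly that every weak-operator neighbourhood of $\E(T)$ meets $\co\{vTv^*\colon v\in U_M\}$, while you wrap the identical computation in a Hahn--Banach separation by contradiction -- a cosmetic difference (just take the real part of the separating vector functional, since weak-operator continuous functionals are complex-valued). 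The one genuine divergence is (6): the paper invokes the criterion of \cite[Corollary 3.4, Chapter IV]{Tak1}, reducing complete positivity to an inequality involving elements $y_i'\in M'$ which is then verified by a matrix trick, whereas you verify $n$-positivity directly by conjugating a positive matrix $[T_{ij}]\in M_n(A)$ with the diagonal unitary $\operatorname{diag}(u,\dots,u)$, observing that $u\mapsto\sum_{i,j}\langle uT_{ij}u^*\xi_j|\xi_i\rangle$ is a nonnegative element of $\WAP(U_M)$, and applying positivity of $\mf$. Your argument is more elementary and self-contained; the paper's buys brevity by outsourcing the reduction to Takesaki's criterion. Both are valid.
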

\begin{proof}
(a) As $\WAP(U_M)$ is a $C^*$-subalgebra of $\cbr(U_M)$, $\wap_\Hh(M)$ is a norm-closed subspace of $B(\Hh)$ because, if $\Vert T_n-T\Vert\to_{n\to\infty} 0$
then 
\[
\Vert \xi\star T_n\star \eta-\xi\star T\star \eta\Vert_\infty\leq \Vert T_n-T\Vert\Vert\xi\Vert\Vert\eta\Vert\to_{n\to\infty}0.
\]
The fact that $T^*\in\wap_\Hh(M)$ for every $T\in\wap_\Hh(M)$ follows from equation (\ref{star}) of Lemma \ref{1.2}. Hence $\wap_\Hh(M)$ is spanned by its selfadjoint elements, and, as $1\in\wap_\Hh(M)$, if $T=T^*\in \wap_\Hh(M)$, then the equality 
\[
T=\frac{1}{2}(\Vert T\Vert+T)-\frac{1}{2}(\Vert T\Vert-T)
\]
shows that $\wap_\Hh(M)$ is spanned by its positive elements.
\\
(b) We have $M'\subset \wap_\Hh(M)$ thanks to Lemma \ref{1.3}. Moreover, $\wap_\Hh(M)$ is an $M'$-bimodule by equation (\ref{commutant}) of Lemma \ref{1.2}. 
\\
(c) As $\wap_\Hh(M)$ is a closed subspace of $B(\Hh)$, in order to show that $K(\Hh)\subset\wap_\Hh(M)$, it suffices to prove that $\wap_\Hh(M)$ contains all rank one operators. Thus, let $\zeta,\om\in\Hh$ and let $T_{\zeta,\om}$ be the rank one operator defined by
\[
T_{\zeta,\om}(\xi):=\langle \xi|\om\rangle\zeta\quad (\xi\in\Hh).
\]
Then we have for all $\xi,\eta\in \Hh$ and $u,v\in U_M$:
\begin{align*}
v\cdot(\xi\star T_{\zeta,\om}\star \eta)(u)
&=
(\xi\star T_{\zeta,\om}\star \eta)(v^*u)=\langle v^*uT_{\zeta,\om}(u^*v\xi)|\eta\rangle\\
&=
\langle v^*u\langle u^*v\xi|\om\rangle\zeta|\eta\rangle=\langle u^*v\xi|\om\rangle\,\langle v^*u\zeta|\eta\rangle\\
&=
\overline{\langle v^*u\om|\xi\rangle}\,\langle v^*u\zeta|\eta\rangle\\
&=
v\cdot(\overline{\f_{\om,\xi}}\f_{\zeta,\eta})(u)
\end{align*}
where we set $\f_{\xi,\eta}:u\mapsto \langle u\xi|\eta\rangle$ for all $\xi,\eta\in\Hh$. As $\WAP(U_M)$ is a $*$-algebra, in order to show that the left orbit $U_M(\xi\star T_{\zeta,\om}\star \eta)$ is relatively weakly compact, it suffices to prove that, for all $\xi,\eta\in\Hh$, the orbit $U_M\f_{\xi,\eta}$ is relatively weakly compact. Thus, let us fix $\xi,\eta\in\Hh$. One has for all $u,v\in U_M$:
\[
(v\cdot \f_{\xi,\eta})(u)=\langle v^*u\xi|\eta\rangle=\langle u\xi|v\eta\rangle=\f_{\xi,v\eta}(u).
\]
Since $\f_{\xi,\eta}\in \cbr(U_M)$ by Lemma \ref{1.3} and since
the orbit $\{v\eta \colon v\in U_M\}$ is weakly relatively compact in $\Hh$, it suffices to prove that the map $\eta\mapsto \f_{\xi,\eta}$ is continuous when $\Hh$ and $\cbr(U_M)$ are equipped with their respective weak topologies. Thus, let $\mu$ be a continuous linear functional on $\cbr(U_M)$. The sesquilinear form
$(\zeta,\om)\mapsto \mu(\f_{\zeta,\om})$ satisfies the following inequality: $|\mu(\f_{\zeta,\om})|\leq \Vert\mu\Vert\Vert\zeta\Vert\Vert\om\Vert$. Hence there exists a unique operator $T_\mu\in B(\Hh)$ such that $\mu(\f_{\zeta,\om})=\langle T_\mu\zeta|\om\rangle$ for all $\zeta,\om\in\Hh$.

Now, if $(\eta_n)\subset \Hh$ converges weakly to $\eta$, we have
\[
\mu(\f_{\xi,\eta_n})=\langle T_\mu\xi|\eta_n\rangle\to_{n\to\infty}
\langle T_\mu\xi|\eta\rangle=\mu(\f_{\xi,\eta}).
\]
This ends the proof of the fact that all rank one operators (hence all compact operators) belong to $\wap_\Hh(M)$.

Let us now prove the existence of the map $\E$ and all its stated properties.
For $T\in\wap_\Hh(M)$, the sesquilinear form $(\xi,\eta)\mapsto \mf(\xi\star T\star \eta)$ is continuous since one has $\Vert\xi\star T\star \eta\Vert_\infty \leq \Vert T\Vert\Vert\xi\Vert\Vert\eta\Vert$. Hence this proves the existence and uniqueness of $\E(T)$ for every $T\in\wap_\Hh(M)$, as well as its linearity and boundedness.
\\
(1) If $T\in \wap_\Hh(M)$ is a positive operator and if $\xi\in\Hh$, then 
\[
\xi\star T\star \xi(u)=\langle uTu^*\xi|\xi\rangle=\langle Tu^*\xi|u^*\xi\rangle\geq 0,
\] 
which implies that $\E(T)\geq 0$ since $\mf$ is a positive functional.
\\
(2) Let $T\in\wap_\Hh(M)$, $v\in U_M$ and $\xi,\eta\in\Hh$. Then, using equality (\ref{gauche}) and left invariance of $\mf$, we get
\begin{align*}
\langle v^*\E(T)v\xi|\eta\rangle
&=
\langle \E(T)v\xi|v\eta\rangle=\mf((v\xi)\star T\star (v\eta))\\
&=
\mf(v\cdot (\xi\star T\star \eta))=\mf(\xi\star T\star \eta)\\
&=
\langle \E(T)\xi|\eta\rangle,
\end{align*}
which shows that $v^*\E(T)v=\E(T)$ for every $v\in U_M$, thus $\E(T)\in M'$.
\\
(3) follows from equality \ref{commutant}.
\\
(4) We could reproduce the proof of statement (iii) of \cite[Lemme 2]{HarpeMoy}, but we present a different one, based on the following property of the mean $\mf$ (property (d) of Theorem \ref{moy}): For every weakly almost periodic function $f$ on $U_M$, its mean $\mf(f)$ belongs to the norm-closed convex hull  of its right orbit $fU_M$. Thus, if $\xi_1,\ldots,\xi_n,\eta_1,\ldots,\eta_n\in\Hh$ and $\ep>0$ are given, there exist $s_1,\ldots,s_m>0$, $\sum_i s_i=1$, and $v_1,\ldots,v_m\in U_M$ such that
\begin{equation}\label{convexe}
\Big\Vert \mf\Big(\sum_j \xi_j\star T\star \eta_j\Big)-\sum_i s_i\Big(\sum_j \xi_j\star T\star \eta_j\Big)\cdot v_i\Big\Vert_\infty\leq \ep.
\end{equation}
By equality (\ref{droite}), one has
\[
\sum_i s_i\Big(\sum_j \xi_j\star T\star \eta_j\Big)\cdot v_i=\sum_i s_i\Big(\sum_j \xi_j\star (v_iTv_i^*)\star \eta_j\Big),
\]
which yields, when evaluated at $u=1$, 
\begin{align*}
\sum_i s_i\Big(\sum_j \xi_j\star (v_iTv_i^*)\star \eta_j\Big)(1)
&=
\sum_i s_i\Big(\sum_j \langle v_iTv_i^*\xi_j|\eta_j\rangle\Big)\\
&=
\sum_j \langle\Big(\sum_i s_iv_iTv_i^*\Big)\xi_j|\eta_j\rangle.
\end{align*}
As $\mf\Big(\sum_j \xi_j\star T\star \eta_j\Big)=\sum_j \langle \E(T)\xi_j|\eta_j\rangle$, we get
\[
\left|\sum_j \langle \E(T)\xi_j|\eta_j\rangle-\sum_j \langle\Big(\sum_i s_iv_iTv_i^*\Big)\xi_j|\eta_j\rangle\right|\leq \ep
\]
which proves the claim.
\\
(5) For $\xi,\eta\in \Hh$, equality (\ref{droite}) shows that the right orbit $(\xi\star (vTv^*)\star \eta)U_M=((\xi\star T\star \eta)\cdot v)U_M$ is weakly relatively compact, hence that $vTv^*\in \wap_\Hh(M)$. As $\mf$ is right invariant, we get
\[
\langle\E(vTv^*)\xi|\eta\rangle=\mf((\xi\star T\star \eta)\cdot v)=\mf(\xi\star T\star \eta)=\langle\E(T)\xi|\eta\rangle
\]
which proves (5). 
\\
(6) By \cite[Corollary 3.4, Chapter IV]{Tak1}, it suffices to prove that 
\[
\sum_{i,j}\Big\langle y_i'^*\E(a_i^*a_j)y_i'\xi|\xi\Big\rangle\geq 0
\]
for all $a_1,\ldots,a_n\in A$, all $y_1',\ldots,y_n'\in M'$ and every $\xi\in\Hh$. We have
\begin{align*}
\sum_{i,j}\Big\langle y_i'^*\E(a_i^*a_j)y_i'\xi|\xi\Big\rangle
&=
\sum_{i,j}\Big\langle \E([a_iy_i']^*[a_jy_j'])\xi|\xi\Big\rangle\\
&=
\mf\Big(\sum_{i,j}\xi*[(a_iy_i')^*(a_jy_j')]*\xi\Big)\geq 0
\end{align*}
because 
\[
\left(\sum_{i,j}\xi*[(a_iy_i')^*(a_jy_j')]*\xi\right)(u)\geq 0
\]
for every $u\in U_M$.
Indeed, set
\[
C=
\left(
\begin{array}{cccc}
a_1y_1' & 0 & \ldots & 0\\
a_2y_2' & 0 & \ldots & 0 \\
\vdots & & & \vdots\\
a_ny_n' & 0 & \ldots & 0
\end{array}\right)\in M_n(B(\Hh)).
\]
Then 
\[
C^*C=
\left(
\begin{array}{cccc}
\sum_{i,j}(a_iy_i')^*(a_jy_j') & 0 & \ldots & 0\\
0 & 0 & \ldots & 0 \\
\vdots & & & \vdots\\
0 & 0 & \ldots & 0
\end{array}\right),
\]
and if 
\[
\zeta_u=\left(
\begin{array}{c}
u^*\xi\\
0\\
\vdots\\
0
\end{array}\right),
\]
we have
\[
\left(\sum_{i,j}\xi*[(a_iy_i')^*(a_jy_j')]*\xi\right)(u)=\langle C^*C\zeta_u|\zeta_u\rangle\geq 0
\]
for every $u\in U_M$.
\end{proof}

The following result describes completely the space $\wap_\Hh(B(\Hh))$. 

\begin{theorem}\label{2.6}
We have the following equality:
\[
\wap_\Hh(B(\Hh))=K(\Hh)+\C.
\]
In particular, if $\wap_\Hh(B(\Hh))=B(\Hh)$ then $\Hh$ is finite-dimensional.
\end{theorem}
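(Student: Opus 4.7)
The plan is to prove the equality by two inclusions. The inclusion $K(\Hh)+\C\subset\wap_\Hh(B(\Hh))$ is immediate from Theorem~\ref{2.2}: part (c) gives $K(\Hh)\subset\wap_\Hh(B(\Hh))$ and part (b), combined with the equality $B(\Hh)'=\C\cdot 1$, gives $\C\cdot 1\subset\wap_\Hh(B(\Hh))$. For the converse I would argue by contraposition, producing for each $T\notin K(\Hh)+\C$ an explicit coefficient function of $T$ that fails Grothendieck's double-limit criterion and hence is not weakly almost periodic.

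The first step is a reduction to the self-adjoint case. Writing $T=\operatorname{Re}(T)+i\operatorname{Im}(T)$, if both self-adjoint parts belonged to $K(\Hh)+\R$ we would have $T\in K(\Hh)+\C$; so, replacing $T$ by $iT$ if needed, I may assume $S:=\operatorname{Re}(T)\notin K(\Hh)+\R$. For a self-adjoint operator this is equivalent to $\sigma_{\mathrm{ess}}(S)$ containing two distinct real numbers $\alpha'<\beta'$. Choosing disjoint open neighborhoods $I_\alpha,I_\beta$ of $\alpha',\beta'$, the spectral projections $E_S(I_\alpha),E_S(I_\beta)$ are mutually orthogonal and of infinite rank; inside them I would extract orthonormal sequences $(e_n),(f_n)$ with $\langle Se_n,e_n\rangle\to\alpha'$ and $\langle Sf_n,f_n\rangle\to\beta'$. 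Passing to further subsequences so that the imaginary parts of the diagonal entries of $T$ also converge, one obtains $\langle Te_n,e_n\rangle\to\alpha$ and $\langle Tf_n,f_n\rangle\to\beta$ with $\operatorname{Re}\alpha=\alpha'\neq\beta'=\operatorname{Re}\beta$, hence $\alpha\neq\beta$.

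Next I would choose a unit vector $e_0$ orthogonal to all the $(e_n)$ and $(f_n)$ (possible since $E_S(I_\alpha)\Hh$ is infinite-dimensional) and set $\xi=\eta=e_0$. For each $m\geq 1$ let $a_m\in U_{B(\Hh)}$ be the self-adjoint involutive unitary swapping $e_0\leftrightarrow e_m$ and fixing every other basis vector; for each $n\geq 1$ let $b_n\in U_{B(\Hh)}$ be the self-adjoint involutive unitary obtained as the product of the transpositions $e_k\leftrightarrow f_k$ for $1\leq k\leq n$ (and identity elsewhere). Since $(a_mb_n)^*=b_na_m$, a direct computation gives $(a_mb_n)^*e_0=b_ne_m$, which equals $f_m$ when $m\leq n$ and equals $e_m$ when $m>n$; hence the coefficient function $\varphi:=e_0\star T\star e_0$ satisfies
\[
\varphi(a_mb_n)=\begin{cases}\langle Tf_m,f_m\rangle & \text{if } m\leq n,\\ \langle Te_m,e_m\rangle & \text{if } m>n.\end{cases}
\]
The iterated limits are therefore $\lim_m\lim_n\varphi(a_mb_n)=\beta$ and $\lim_n\lim_m\varphi(a_mb_n)=\alpha$. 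Since $\alpha\neq\beta$ and both iterated limits exist, Grothendieck's double-limit criterion shows that the orbit of $\varphi$ is not weakly relatively compact in $C_b(U_{B(\Hh)})$, so $\varphi\notin\WAP(U_{B(\Hh)})$ and $T\notin\wap_\Hh(B(\Hh))$. The in-particular statement then follows at once: $\wap_\Hh(B(\Hh))=B(\Hh)$ forces $B(\Hh)=K(\Hh)+\C$, which makes the Calkin algebra one-dimensional and hence $\dim\Hh<\infty$.

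The main obstacle is the self-adjoint reduction together with the careful choice of two orthonormal sequences along which the \emph{complex} diagonal entries of $T$ have distinct limits $\alpha,\beta$; once these are in hand, the involutive unitaries $a_m,b_n$ reduce the double-limit verification to an elementary case analysis.
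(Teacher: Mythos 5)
Your proof is correct, and it takes a genuinely different route from the paper's. The paper first invokes Weyl's theorem (every selfadjoint operator is diagonal plus compact), reduces to a selfadjoint \emph{diagonal} $D\in\wap_\Hh(B(\Hh))$, and then runs a three-step case analysis (at most one eigenvalue with infinite-dimensional eigenspace; at most one accumulation point of the eigenvalues; hence $D-\lm$ compact), each step using swap unitaries and the double-limit criterion of Proposition \ref{criterewap}. You bypass the diagonalization-modulo-compacts entirely: from $T\notin K(\Hh)+\C$ you extract (after multiplying by $i$ if necessary) two distinct points of the essential spectrum of $\operatorname{Re}(T)$, hence two orthonormal sequences along which the diagonal entries $\langle Te_n|e_n\rangle$, $\langle Tf_n|f_n\rangle$ have distinct limits, and a single double-limit computation with the involutive unitaries $a_m,b_n$ finishes the argument; your choice of indices correctly matches the product form $f(x_iy_j)$ required by Proposition \ref{criterewap}. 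What each approach buys: yours is more unified (the ``two infinite-dimensional eigenspaces'' and ``two accumulation points'' cases of the paper are subsumed in ``two points of $\sigma_{\mathrm{ess}}$'') and avoids the external Weyl--von Neumann ingredient, while the paper's route makes the structure of the offending operators very explicit at the price of the case analysis. One small repair: your justification for the existence of the auxiliary unit vector $e_0$ (``since $E_S(I_\alpha)\Hh$ is infinite-dimensional'') is not quite enough, since the chosen sequences $(e_n),(f_n)$ could in principle be total in $\Hh$; the immediate fix is to reserve one vector of the first sequence as the reference vector (take $e_0:=e_1$ and re-index), or to choose the sequences inside a proper infinite-dimensional subspace, after which the computation is unaffected. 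The concluding ``in particular'' step is fine: if $B(\Hh)=K(\Hh)+\C$ with $\Hh$ infinite-dimensional, an infinite-rank, infinite-corank projection would be a compact perturbation of a scalar, which is impossible.
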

\begin{proof}
Since $\Hh$ is a separable Hilbert space, it follows from a result due to Weyl \cite[Proposition 4, p.194]{HarpeCalkin} that 
every selfadjoint operator $T\in B(\Hh)$ is a sum $T=D+K$ where
$K=K^*\in K(\Hh)$ and where $D$ is a selfadjoint, diagonal operator, \textit{i.e.} there exists an orthonormal basis $(\ep_i)_{i\in I}$ of $\Hh$ and real numbers $(\lambda_i)_{i\in I}$ such that $D\ep_i=\lambda_i\ep_i$ for every $i\in I$.

Hence, it suffices to prove that if $D=D^*\in \wap_\Hh(B(\Hh))$ then $D=D_c+\lm$ where $D_c$ is a selfadjoint, compact operator and $\lm\in\R$. We assume that $\Hh$ is infinite-dimensional and we set $U(\Hh):=U_{B(\Hh)}$. The proof is divided into three steps.\\
(i) Let $D=D^*\in \wap_\Hh(B(\Hh))$ be a selfadjoint, diagonal operator. Then $D$ has at most one eigenvalue whose associated eigenspace is infinite-dimensio-nal.
Indeed, assume on the contrary that $D=D^*$ has two eigenvalues $\lm_1\not=\lm_2$ whose associated eigenspaces $\Hh_1$ and $\Hh_2$ are infinite-dimensional. We claim that $D$ does not belong to $\wap_\Hh(B(\Hh))$. In order to prove that, we are going to apply Proposition \ref{criterewap}. Thus, let $(\ep_j)_{j\geq 1}$ (resp. $(\delta_j)_{j\geq 1}$) be an orthonormal basis of $\Hh_1$ (resp. $\Hh_2$). For all integers $i,j\geq 1$, we define selfadjoint, unitary operators $u_i$ and $v_j$ as follows: their restrictions to the orthogonal complement $(\Hh_1\oplus\Hh_2)^\perp$ is the identity, and then set
\[
u_i\ep_k=
\begin{cases}
\ep_k, & k\leq i\\
\delta_k, & k>i
\end{cases}
\]
and 
\[
u_i\delta_k=
\begin{cases}
\delta_k, & k\leq i\\
\ep_k, & k>i.
\end{cases}
\]
Next,
$v_j$ exchanges $\ep_1$ and $\ep_{j}$, and $v_j\xi=\xi$ for all $\xi\in\{\ep_1,\ep_{j}\}^\perp$. 

Let us fix $i\geq 1$; we have
\[
u_iv_j\ep_1=u_i\ep_{j}=\delta_{j}
\]
for $j>i$, hence 
\[
\ep_1\star D\star\ep_1(v_ju_i)=\langle v_ju_iDu_iv_j\ep_1|\ep_1\rangle=
\langle D\delta_{j}|\delta_{j}\rangle=\lm_2
\]
for every $j>i$, and thus $\lim_j\ep_1\star D\star\ep_1(v_ju_i)=\lm_2$ for every $i$. This gives
\[
\lim_i(\lim_j\ep_1\star D\star\ep_1(v_ju_i))=\lm_2.
\]
Next, let us fix $j\geq 1$. If $i>j$, one has 
\[
u_iv_j\ep_1=u_i\ep_{j}=\ep_{j},
\]
and we get $\lim_i\ep_1\star D\star\ep_1(v_ju_i)=\lm_1$. This implies that 
\[
\lim_j(\lim_i\ep_1\star D\star\ep_1(v_ju_i))\not=\lim_i(\lim_j\ep_1\star D\star\ep_1(v_ju_i)),
\]
and Proposition \ref{criterewap} implies that $\ep_1\star D\star \ep_1\notin \WAP(U(\Hh))$, which proves the claim. 
In particular, if $D$ has finite spectrum, then it has exactly one eigenvalue $\lm$ whose associated eigenspace is infinite-dimensional. This implies that $D-\lm$ is a finite-rank operator.\\
(ii) Assume next that $D$ admits infinitely many distinct eigenvalues $(\lm_k)_{k\geq 1}$ such that at most one of them, say $\lm_1$, has an infinite-dimensional eigenspace. If it is the case, replacing $D$ by $D-\lm_1$, we assume that all eigenspaces of all non-zero eigenvalues are finite-dimensional. As the spectrum of $D$ is infinite and bounded, it possesses at least one accumulation point. In fact, we claim that it possesses exactly one such point. Indeed, if $(\lm_k)$ has two distinct accumulation points $\alpha\not=\beta$, then there are sequences $(u_i),(v_j)\subset U(\Hh)$ and a vector $\xi\in\Hh$ such that
\[
\lim_i(\lim_j \xi\star D\star\xi(v_ju_i))=\alpha\not=\beta=\lim_j(\lim_i \xi\star D\star\xi(v_ju_i)),
\]
and $D$ does not belong to $\wap_\Hh(B(\Hh))$. Indeed, let $(\alpha_k)_{k\geq 1}$ (resp. $(\beta_k)_{k\geq 1}$) be a sequence of distincts eigenvalues of $D$ which converges to $\alpha$ (resp. $\beta$), and such that $\alpha_k\not=\beta_\ell$ for all $k,\ell$. Choose for every $k$ a norm-one eigenvector $\ep_k$ (resp. $\delta_k$) of $\alpha_k$ (resp. $\beta_k$), so that the sequences $(\ep_k)$ and $(\delta_k)$ are orthonormal. Then, for all $i,j\geq 1$, define $u_i$ and $v_j$ exactly as in Part (i). Then for every $i$, one has
$\ep_1\star D\star \ep_1(v_ju_i)=\beta_{j}\to \beta$ as $j\to\infty$, so that
\[
\lim_i(\lim_j \ep_1\star D\star\ep_1(v_ju_i))=\beta
\]
and similarly,
\[
\lim_j(\lim_i \ep_1\star D\star\ep_1(v_ju_i))=\alpha,
\]
which proves the claim.\\
(iii) Finally, we are left with the case where the set of distinct non-zero eigenvalues of $D$ is infinite and has a unique accumulation point $\lm$, and such that every eigenspace is finite-dimensional (except maybe the one associated to $\lm$). Thus $D-\lm$ has the same properties as $D$, but as $0$ is the only accumulation point of $D-\lm$ and as all non-zero eigenvalues have finite-dimensional eigenspaces, this proves that $D-\lm$ is a compact operator.
\end{proof}

\begin{proposition}\label{2.5}
Assume that $\Hh$ is infinite-dimensional, and let $M\subset B(\Hh)$ be either a diffuse von Neumann algebra, or $M=B(\Hh)$, and let $\E:\wap_\Hh(M)\rightarrow M'$ be the positive map of Theorem \ref{2.2}. Then $K(\Hh)\subset \ker\E$.
\end{proposition}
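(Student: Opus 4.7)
The plan is to show that $\mf$ annihilates $\xi \star T \star \eta$ for every rank-one operator $T = T_{\zeta,\omega}\colon \xi'\mapsto\langle\xi'|\omega\rangle\zeta$ and all $\xi,\eta\in\Hh$. Since $\E$ is bounded and linear and the finite-rank operators are norm-dense in $K(\Hh)$, this suffices by \eqref{defE}. Fix $T = T_{\zeta,\omega}$ and $\xi,\eta\in\Hh$, put $f := \xi\star T\star\eta$, and reduce the proof to $\mf(f) = 0$.

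The averaging trick exploits linearity, right-invariance, and sup-norm continuity of $\mf$, together with \eqref{droite}: for any $v_1,\ldots,v_N\in U_M$,
\[
\mf(f) = \mf\Big(\tfrac{1}{N}\sum_{n=1}^N f\cdot v_n\Big) = \mf\Big(\xi\star A_N\star\eta\Big),\qquad A_N := \tfrac{1}{N}\sum_{n=1}^N T_{v_n\zeta,v_n\omega},
\]
so that $|\mf(f)|\leq \|\xi\|\|\eta\|\cdot \|A_N\|$. The task reduces to choosing $v_n\in U_M$ with $\|A_N\|\to 0$. The natural strategy is to orthogonalize: from $\Tr(T_{a,b}^* T_{c,d}) = \langle c|a\rangle\langle b|d\rangle$ one sees that if $\langle v_m\zeta|v_n\zeta\rangle = 0$ whenever $m\neq n$, then all off-diagonal contributions to $\|A_N\|_{HS}^2$ vanish, yielding $\|A_N\|^2 \leq \|A_N\|_{HS}^2 = \|\zeta\|^2\|\omega\|^2/N \to 0$.

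The crucial step is therefore to produce a single unitary $u\in U_M$ with $\langle u^n\zeta|\zeta\rangle = 0$ for every $n\neq 0$, so that $v_n := u^n$ furnishes the orthogonal family. Both cases in the hypothesis guarantee that $M$ contains a diffuse abelian von Neumann subalgebra $A\simeq L^\infty(Y,\nu)$ for a diffuse probability measure $\nu$ on a standard Borel space: for $M = B(\Hh)$ I would identify $\Hh\simeq L^2([0,1])$ and take $A := L^\infty([0,1])$; for diffuse $M$ with separable predual this is a standard fact (iterated bisection of projections). The vector state $a\mapsto\langle a\zeta|\zeta\rangle$ on $A$ is represented by a positive measure $\mu_\zeta\ll\nu$ of mass $\|\zeta\|^2$, hence itself diffuse. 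By the isomorphism theorem for diffuse standard probability spaces there is a measurable $\varphi\colon Y\to[0,1]$ with $\varphi_*(\mu_\zeta/\|\zeta\|^2) = dx$; then $u := e^{2\pi i\varphi}\in A\subset M$ satisfies
\[
\langle u^n\zeta|\zeta\rangle = \int_Y e^{2\pi i n\varphi(y)}\,d\mu_\zeta(y) = \|\zeta\|^2\int_0^1 e^{2\pi i n x}\,dx = 0 \quad (n\neq 0).
\]

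The main obstacle is this last construction of a Fourier-type unitary inside $M$; the two hypotheses in the proposition are exactly what make $M$ rich enough to contain a diffuse abelian subalgebra in which the spectral measure of $\zeta$ can be reduced, by change of variables, to Lebesgue measure on $[0,1]$.
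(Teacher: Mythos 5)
Your proof is correct, but it takes a genuinely different route from the paper's. The paper reduces, via positivity of $\E$, to showing $\mf(|\f|)=0$ for $\f(u)=\langle u\zeta|\xi\rangle$, and then invokes property (d) of Theorem \ref{moy} (the mean lies in the uniform closure of convex combinations of translates): in the diffuse case it evaluates such a convex combination at a unitary $u_n$ from a weakly null sequence in $U_M$, and in the case $M=B(\Hh)$ at an explicit unitary sending $\zeta$ orthogonal to finitely many vectors. You instead use only linearity, contractivity and right-invariance of $\mf$: averaging $f=\xi\star T_{\zeta,\om}\star\eta$ over right translates by $v_n=u^n$ turns the problem, via \eqref{droite}, into the operator-norm estimate $|\mf(f)|\le\Vert\xi\Vert\Vert\eta\Vert\Vert A_N\Vert$, and the Hilbert--Schmidt orthogonality computation gives $\Vert A_N\Vert\le\Vert\zeta\Vert\Vert\om\Vert/\sqrt N\to 0$ once $\langle u^n\zeta|\zeta\rangle=0$ for $n\neq 0$. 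Your construction of such a ``Haar unitary relative to the vector state of $\zeta$'' is sound: both hypotheses yield a unital diffuse abelian subalgebra $A\subset M$ (a masa of a diffuse algebra is diffuse; for $B(\Hh)$ take $L^\infty[0,1]$ on $L^2[0,1]$), and since $\Hh$ is separable throughout the paper, $A\simeq L^\infty([0,1])$, the normal state $a\mapsto\langle a\zeta|\zeta\rangle$ is given by a density, hence a diffuse measure, and the measure-isomorphism (or distribution-function) argument produces $u=e^{2\pi i\varphi}$ as claimed; the case $\zeta=0$ is trivial and should be noted. What each approach buys: the paper's argument needs the finer property (d) of the invariant mean but only soft facts about $M$ (a weakly null sequence of unitaries, respectively an explicit permutation unitary), and treats the two cases separately; yours needs only that $\mf$ is a right-invariant state but imports the structural facts that $M$ contains a diffuse abelian subalgebra and the isomorphism theorem for diffuse standard probability spaces, in exchange for a unified treatment of both cases, a quantitative rate $O(N^{-1/2})$, and no appeal to the reduction by positivity to $T_{\zeta,\zeta}$.
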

\begin{proof}
Since $\E$ is a continuous, positive map, it suffices to prove that, for all $\zeta,\xi\in\Hh$, $\langle \E(T_{\zeta,\zeta})\xi|\xi\rangle=0$  where, as in the proof of Theorem \ref{2.2}, $T_{\zeta,\zeta}$ is the rank one operator defined by $T_{\zeta,\zeta}(\xi)=\langle \xi|\zeta\rangle\zeta$ for every $\xi\in\Hh$.

Thus, fix $\zeta,\xi\in \Hh$; we have for every $u\in U_M$:
\begin{align*}
\xi\star T_{\zeta,\zeta}\star\xi(u)
&=
\langle uT_{\zeta,\zeta}u^*\xi|\xi\rangle
=\langle \langle u^*\xi|\zeta\rangle u\zeta|\xi\rangle\\
&=
\langle \xi|u\zeta\rangle\langle u\zeta|\xi\rangle=|\langle u\zeta|\xi\rangle|^2.
\end{align*}
The function $\f$ on $U_M$ defined by $\f(u):=\langle u\zeta|\xi\rangle$ for every $u\in U_M$ belongs to $\WAP(U_M)$ by the proof of property (c) of Theorem \ref{2.2}. As 
\[
|\langle u\zeta|\xi\rangle|^2\leq \Vert\zeta\Vert\Vert\xi\Vert |\langle u\zeta|\xi\rangle|
\] 
for every $u\in U_M$, it suffices to prove that $\mf(|\f|)=0$. Its proof is inspired by that of \cite[Theorem 1.3]{BR}. Let $\ep>0$. By condition (d) of Theorem \ref{moy}, there exist $v_1,\ldots,v_m\in U_M$ and $t_1,\ldots,t_m>0$ such that $\sum_j t_j=1$ and
\[
\Big|\sum_j t_j |\f(v_j^*u)|-\mf(|\f|)\Big|<\ep/2
\]
for every $u\in U_M$. Assume first that $M$ is diffuse; then there exists a sequence $(u_n)\subset U_M$ such that $u_n\to 0$ weakly. Thus, there exists $n$ such that 
$|\f(v_j^*u_n)|<\ep/2$ for every $j$, so that 
\[
0\leq \sum_j t_j |\f(v_j^*u_n)|<\ep/2.
\]
This implies that
\begin{align*}
0\leq \mf(|\f|)
&\leq
\Big|\sum_j t_j |\f(v_j^*u_n)|-\mf(|\f|)\Big|+\sum_j t_j |\f(v_j^*u_n)|\\
&\leq 
\ep.
\end{align*}
Finally, assume that $M=B(\Hh)$, and let $F\subset \Hh$ be the linear span of the set $\{v_1\xi,\ldots,v_m\xi\}$, which is finite-dimensional. Since $\Hh$ is infinite-dimensional, there exists $\eta\in F^\perp$ such that $\Vert\eta\Vert=\Vert\zeta\Vert$. Hence, there exists a unitary operator $u$ on $\Hh$ such that $u\zeta=\eta\perp v_j\xi$ for every $j=1,\ldots,m$. This implies that $\f(v_j^*u)=0$ for every $j$ and thus that $\mf(|\f|)<\ep/2$.
\end{proof}

We end the present section with three remarks. The last one is inspired by \cite[Remarques (i)]{HarpeMoy}.

\begin{remark}
Let $\mu\in \cbr(U_M)^*$ be a continuous linear functional on the $C^*$-algebra $\cbr(U_M)$. Then, the effect of $\mu$ on $B(\Hh)$ can be described as follows:
as in the proof of Theorem \ref{2.2}, there exists a unique bounded, linear map $\E_\mu:B(\Hh)\rightarrow B(\Hh)$ such that 
\[
\langle \E_\mu(T)\xi|\eta\rangle=\mu(\xi\star T\star\eta)
\]
for all $T\in B(\Hh)$ and $\xi,\eta\in\Hh$. Equation (\ref{commutant}) implies that $\E_\mu$ is an $M'$-bimodular map, i.e. $\E_\mu(x'Ty')=x'\E_\mu(T)y'$ for all $T\in B(\Hh)$ and $x',y'\in M'$.
\end{remark}

\begin{remark}
If $M$ is such that $\wap_\Hh(M)=B(\Hh)$, then $M$ is Approximately Finite-Dimensional. Indeed, if it is the case, as in \cite{HarpeMoy}, the map $\E$ is a conditional expectation from $B(\Hh)$ onto $M'$, and $M$ has Schwartz's property P. Moreover, as will be proved in \S 3 (Theorem \ref{3.1} and Remark \ref{injective}), $M$ is finite since it is then contained in $\wap_\Hh(M)$.
\end{remark}

\begin{remark}
Let $\Hh$ be an infinite-dimensional Hilbert space and let $A\subset B(\Hh)$ be a unital $C^*$-algebra which has no tracial states. Let $M\subset B(\Hh)$ be a von Neumann algebra containing $A$. Then $A\not\subset \wap_\Hh(M)$. Indeed, otherwise, we would have $\E(uau^*)=\E(a)$ for all $a\in A$ and $u\in U_M$. This would imply that $\E(xy-yx)=0$ for all $x,y\in A$, but this leads to a contradiction since $1$ is a finite sum of commutators of elements of $A$ by \cite[Theorem 1]{Pop}.
\end{remark}

\section{The case of finite von Neumann algebras}

In this section, we consider a von Neumann algebra $M\subset B(\Hh)$ and we denote by $M_*$ its predual. We denote by $\Int(M)$ the group of its inner automorphisms and, for $v\in U_M$, we denote by $\Ad(v)\in\Int(M)$ the automorphism given by $\Ad(v)(x)=vxv^*$ for every $x\in M$.

Let $B(M)$ (resp. $B_*(M)$) denote the Banach space of all bounded (resp. ultraweakly continuous) linear operators on $M$. The weak$^*$ topology on $B(M)$ is the $\sigma(B(M),M\otimes_\gamma M_*)$-topology, where $M\otimes_\gamma M_*$ is the projective tensor product of $M$ and $M_*$ (see \cite[Chapter IV]{Tak1}).
In fact, if $(\Phi_i)\subset B(M)$ is a bounded net, then it converges weakly$^*$ to $\Phi\in B(M)$ if and only if 
\[
\f(\Phi_i(x))=\langle \Phi_i,x\otimes\f\rangle \to \f(\Phi(x))=\langle \Phi,x\otimes\f\rangle
\]
for all $x\in M$ and $\f\in M_*$.

If $M$ is finite, we denote by $\ctr_M$ its canonical centre-valued trace.  
In this case, it follows from (the proof of) \cite[Theorem V.2.4]{Tak1} that the weak$^*$ closure of $\Int(M)$ in $B(M)$ is contained in $B_*(M)$. Thus, as $\Int(M)$ is bounded, it is relatively weakly$^*$ compact in $B_*(M)$ (see \cite[Exercise 6, p. 333]{Tak1}.)

This allows us to prove the following theorem.

\begin{theorem}\label{3.1}
The von Neumann algebra $M$ is finite if and only if $M\subset \wap_\Hh(M)$. In other words, $M$ is finite if and only if, for every $x\in M$, all its coefficient functions $\xi\star x\star \eta$ are weakly almost periodic.
If it is the case, $\wap_\Hh(M)$ contains $C^*(M,M')$, the $C^*$-algebra generated by $M$ and $M'$ in $B(\Hh)$, and the restriction $\E_M$ of $\E$ to $C^*(M,M')$ has the additional properties:
\begin{enumerate}
\item [(i)] The restriction of $\E_M$ to $M$ coincides with the canonical centre-valued trace $\ctr_M$.
\item [(ii)] $\E_M:C^*(M,M')\rightarrow M'$ is a conditional expectation.
\end{enumerate}
\end{theorem}
\begin{proof}
If $M\subset \wap_\Hh(M)$, then the restriction of $\E$ to $C^*(M,M')$ satisfies condition (5) in Theorem \ref{2.2}, and this implies that $\E(xy)=\E(yx)$ for all $x,y\in M$. Furthermore, if $x\in M$ and $u'\in U_{M'}$, one has by property (3) in Theorem \ref{2.2}: $u'^*\E(x)u'=\E(u'^*xu')=\E(x)$, which means that the restriction of $\E$ to $M$ maps $M$ onto its centre $Z(M)$. Then \cite[Corollary 3, Part III, Chapter 8]{DivN} shows that $M$ is a finite von Neumann algebra.

Conversely, suppose that $M$ is finite.
Let $x\in M$ and $\xi,\eta\in \Hh$. We must prove that the coefficient function $\xi\star x\star \eta$ belongs to $\WAP(U_M)$.
It follows from \cite[Proposition 7]{Grothen} or by Proposition \ref{4.3} of the appendix that it suffices to prove that the right orbit $(\xi\star x\star   \eta)U_M$ is relatively weakly compact in $\cbr(U_M)$. But we have, by equation (\ref{droite}):
\[
(\xi\star x\star \eta)U_M=\{\xi\star vxv^*\star \eta\colon v\in U_M\}=\{\xi\star \theta(x)\star \eta\colon \theta\in \Int(M)\}.
\]
Thus, from Grothendieck's Theorem \ref{Grothendieck} in the appendix, it suffices to prove that if $(u_i)\subset U_M$ and $(\theta_j)\subset \Int(M)$ are sequences such that the following double limits
\[
\ell_1:=\lim_i(\lim_j(\xi\star \theta_j(x)\star \eta)(u_i))
\quad\textrm{and}\quad
\ell_2:=\lim_j(\lim_i(\xi\star \theta_j(x)\star \eta)(u_i))
\]
exist, then they are equal. Set $\psi_i=\Ad(u_i)$ for every $i$. Then $(\xi\star \theta_j(x)\star \eta)(u_i)=\langle u_i\theta_j(x)u_i^*\xi|\eta\rangle=
\langle \psi_i(\theta_j(x))\xi|\eta\rangle$ for all $i,j$. Extracting subsequences if necessary, we assume that $(\theta_j)$ converges weakly$^*$ to some limit $\theta\in \overline{\Int(M)}\subset B_*(M)$, and that $(\psi_i)$ converges weakly$^*$ to some limit $\psi\in \overline{\Int(M)}\subset B_*(M)$. Denoting by $\om_{\xi,\eta}\in M_*$ the normal linear functional $y\mapsto \langle y\xi|\eta\rangle$, 
one has for every $i$, on the one hand, 
\[
\lim_j\langle \psi_i(\theta_j(x))\xi|\eta\rangle=\langle \psi_i(\theta(x))\xi|\eta\rangle=\langle \psi_i\circ\theta, x\otimes \om_{\xi,\eta}\rangle
=\langle \psi_i,\theta(x)\otimes\om_{\xi,\eta}\rangle
\]
because $\psi_i$ is a normal map. 
\\
Hence, $\ell_1=\lim_i\langle \psi_i, \theta(x)\otimes \om_{\xi,\eta}\rangle=\langle\psi\circ\theta(x)\xi|\eta\rangle$.

On the other hand, one has for every $j$
\[
\lim_i\langle \psi_i(\theta_j(x))\xi|\eta\rangle=\langle\psi(\theta_j(x))\xi|\eta\rangle.
\]
As $\psi$ is normal, we get
\[
\ell_2=\lim_j\langle\psi(\theta_j(x))\xi|\eta\rangle=\langle\psi\circ\theta(x)\xi|\eta\rangle=\ell_1.
\]
Hence $M\subset \wap_\Hh(M)$.

From now on, we assume that $M$ is finite.
\\
(i) For every $x\in M$, the map $\E$ is constant on the convex hull of $\{uxu^*\colon u\in U_M\}$, hence, by its boundedness, it is constant on the norm closure $K'_x$ of the latter. By \cite[Theorem 1, Part III, Chapter 5]{DivN}, $\E(\ctr_M(x))=\E(x)$, and as $\ctr_M(x)\in M\cap M'$, we have $\E(x)=\ctr_M(x)$. This proves (i).
\\
(ii) follows from property (3) of Theorem \ref{2.2}.
\end{proof}

\begin{remark}
The relative compactness of $\Int(M)$ in $B_*(M)$ is a special case of the notion of $G$-finite von Neumann as defined in \cite{KS}; see also \cite{Stoermer} and \cite{Yeadon}.
\end{remark}

The following example shows that the hypothesis that $M$ is diffuse in Proposition \ref{2.5} cannot be removed. Even though it is a special case of Theorem \ref{3.1}, we think it is worth being discussed.

\begin{example}
Set $\Hh=\ell^2(\N)$, let $(\delta_k)_{k\in\N}$ be the natural orthonormal basis of $\Hh$ and let $M=A=\ell^\infty(\N)$ be the atomic maximal abelian $*$-subalgebra of $B(\Hh)$ acting by pointwise multiplication on $\Hh$ so that $a\delta_k=a(k)\delta_k$ for all $a\in A$ and $k\in\N$. We claim that $\wap_\Hh(A)=B(\Hh)$ and that $\E(T)\in A$ is the function $k\mapsto \langle T\delta_k|\delta_k\rangle$ for every $T\in B(\Hh)$. 
\\
Indeed, let $T\in B(\Hh)$. In order to prove that it belongs to $\wap_\Hh(A)$, it suffices to verify that $\delta_k\star T\star\delta_\ell$ is weakly almost periodic for all $k,\ell\in\N$. Thus, let us fix integers $k$ and $\ell$. As $U_A=\T^\N$, where $\T$ denotes the unit circle, we have for all $u\in U_A$:
\[
\delta_k\star T\star\delta_\ell(u)=\langle T\overline{u(k)}\delta_k|\overline{u(\ell)}\delta_\ell\rangle=\langle T\delta_k|\delta_\ell\rangle \overline{u(k)}u(\ell).
\]
But
\begin{align*}
\overline{u(k)}u(\ell)
&=
\langle \delta_k|u\delta_k\rangle\langle u\delta_\ell|\delta_\ell\rangle=\langle \langle \delta_k|u\delta_k\rangle u\delta_\ell|\delta_\ell\rangle\\
&=
\langle u\langle u^*\delta_k|\delta_k\rangle\delta_\ell|\delta_\ell\rangle=\langle uT_{\delta_\ell,\delta_k}u^*\delta_k|\delta_\ell\rangle\\
&=
\delta_k\star T_{\delta_\ell,\delta_k}\star \delta_\ell(u)
\end{align*}
where we use the same notation as in the proof of Theorem \ref{2.2}(c) for rank one operators. Hence $T\in\wap_\Hh(A)$ and
\[
\langle \E(T)\delta_k|\delta_\ell\rangle=\langle T\delta_k|\delta_\ell\rangle\cdot \mf(\delta_k\star T_{\delta_\ell,\delta_k}\star \delta_\ell).
\]
Set $\f_{k,\ell}=\delta_k\star T_{\delta_\ell,\delta_k}\star \delta_\ell$ for short so that $\f_{k,\ell}(u)=\overline{u(k)}u(\ell)$ for every $u\in U_A$.

If $k=\ell$, then $\f_{k,k}(u)=1$ for every $u$ and $\mf(\f_{k,k})=1$. If $k\not=\ell$, then we can view $\f_{k,\ell}$ as the continuous function on $\T^2$
defined by $\f_{k,\ell}(z,w)=\overline zw$. As $\T^2$ is a compact group, one has $C(\T^2)=\WAP(\T^2)$ and the invariant mean on the latter coincides with the Haar measure. Hence, by property (d) of Theorem \ref{moy}, we have
\[
\mf(\f_{k,\ell})=\iint_{\T^2}\overline zwdzdw=0.
\]
\end{example}

\begin{remark}\label{injective}
Assume that $M$ is a finite von Neumann subalgebra of $B(\Hh)$, and let $\tau$ be a normal, faithful normalized trace on $M$. Set $\Vert x\Vert_2:=\tau(x^*x)^{1/2}$ for every $x\in M$. Then the topology on $U_M$ is induced by the complete, bi-invariant metric $(u,v)\mapsto \Vert u-v\Vert_2$. This means that every bounded, continuous function $f$ on $U_M$ is uniformly right continuous if and only if it is uniformly left continuous, namely that the map $v\mapsto v\cdot f$ is continuous if and only if the map $v\mapsto f\cdot v$ is. Suppose moreover that $M$ is injective, and let $(M_n)_{n\geq 1}$ be an increasing sequence of finite-dimensional $C^*$-subalgebras of $M$, such that $1\in M_n$ for every $n$ and that their union $M_\infty:=\bigcup_n M_n$ is strongly dense in $M$. Then, as proved in \cite{HarpeMoy}, $\cbr(U_{M_\infty})$ has a bi-invariant mean $\mu$, hence so does $\cbr(U_M)$ by the bilateral uniform continuity of all elements of the latter algebra. Consequently, even if we do not know whether $\wap_\Hh(M)$ is equal to $B(\Hh)$, the existence of $\mu$ implies the existence of a positive map $\E_\mu:B(\Hh)\rightarrow M'$ which has properties (1)--(6) of Theorem \ref{2.2} as well as those in Theorem \ref{3.1}. In particular, the restriction of $\E_\mu$ to $M$ is equal to $\ctr_M$.
\end{remark}


\section{The case of almost periodic operators relative to $M$}

Let $M\subset B(\Hh)$ be a von Neumann algebra. Recall that $\ap_\Hh(M)$ is the set of all operators $T\in B(\Hh)$ such that $\xi\star T\star\eta\in \AP(U_M)$ for all $\xi,\eta\in\Hh$. As in the proof of Theorem \ref{2.2}, $\ap_\Hh(M)$ is an operator system which contains $M'$. We remind the reader of characterizations of almost periodic functions on topological groups in Theorem \ref{apgroup}, which is taken from \cite[Theorem 16.2.1]{DiC}. We are grateful to P. de la Harpe for having indicated that reference.

We also need to recall definitions of diffuse and atomic von Neumann algebras. Our reference on these notions is \cite{SS}.
Denote by $\Pp(M)$ the set of all orthogonal projections of $M$. An element $e\in \Pp(M)$ is an \textit{atom} in $M$ if it satisfies the equality: $eMe=\C e$. If it is the case, then its central cover $z(e)$ (i.e. the smallest projection $z$ of the centre $Z(M)$ of $M$ such that $ze=e$) is an atom of the centre $Z(M)$, thus $Mz(e)$ is a factor.

\begin{definition}
The von Neumann algebra $M$ is called \textit{atomic} if, for every non-zero projection $f\in\Pp(M)$, there exists an atom $e\in \Pp(M)$ such that $e\leq f$.
If $M$ contains no atoms, then it is called \textit{diffuse}. 
\end{definition}

As a consequence of the above facts, if $M$ is atomic and finite, then it is a direct sum of finite-dimensional factors.

\begin{lemma}\label{diffuse}
Let $M$ be a von Neumann algebra. Then there is a unique central projection $z$ such that $Mz$ is diffuse and $M(1-z)$ is atomic.
\end{lemma}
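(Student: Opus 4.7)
The plan is to construct $1-z$ as the central support of the atoms of $M$, verify the two structural properties by direct inspection, and then obtain uniqueness via a standard argument showing that a candidate ``discrepancy'' corner would have to be simultaneously diffuse and atomic, hence zero.

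For existence, set $p := \bigvee\{z(e) : e \in \Pp(M) \text{ is an atom of } M\}$, a central projection, and let $z := 1 - p$. To see that $Mz$ is diffuse, suppose $e \le z$ is an atom of $M$; then $z(e) \le p$ by definition of $p$, while $e \le 1-p$ forces $z(e) \le 1-p$, so $z(e) = 0$ and hence $e=0$. To see that $M(1-z) = Mp$ is atomic, take a non-zero $f \in \Pp(Mp)$; since $f = fp$, there must exist an atom $e$ of $M$ with $f z(e) \neq 0$ (otherwise $f \le 1 - z(e)$ for every atom $e$ would give $f \le 1-p$, contradicting $f \le p$). At this point I shall use the structural remark recalled just above the lemma: $Mz(e)$ is a factor, and because it contains the minimal projection $e$ it is of type~I. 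Consequently $fz(e)$ dominates a minimal projection $e'$ of $Mz(e)$, and the identity $e'Me' = e' z(e) M z(e) e' = e' M z(e) e' = \C e'$ shows that $e'$ is an atom of $M$ lying below $f$.

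For uniqueness, suppose $z_1, z_2$ are two central projections for which the conclusion of the lemma holds, and set $q := z_1 (1 - z_2)$, a central projection. I intend to show that $Mq = \{0\}$ by proving that it is at once diffuse and atomic. Because $q$ is central, every $e \le q$ satisfies $eMe = eqMqe = eMqe$; this first shows that any atom of $Mq$ is an atom of $M$ lying under $z_1$, contradicting the diffuseness of $Mz_1$, so $Mq$ has no atoms. On the other hand, if $q \neq 0$, then $q$ is itself a non-zero projection of $M(1-z_2)$, so the atomicity of $M(1-z_2)$ furnishes an atom of $M(1-z_2)$ below $q$; the same corner identity makes it an atom of $Mq$, a contradiction. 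Hence $q = 0$, i.e.\ $z_1 \le z_2$, and the symmetric argument completes $z_1 = z_2$.

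The only step that is more than bookkeeping with central projections is the passage through the type~I factor structure of $Mz(e)$ to produce an atom of $M$ below an arbitrary non-zero projection of $Mp$; this is what I expect to be the main (and essentially the only) conceptual input, the remainder being lattice operations in $Z(M)$ and the observation that a corner of a diffuse algebra remains diffuse while a corner of an atomic algebra remains atomic.
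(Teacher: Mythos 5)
Your proof is correct and takes essentially the same route as the paper: both identify $1-z$ with the central support of the atoms of $M$ (the paper via a Zorn-maximal family of atoms with pairwise orthogonal central covers, you via the supremum $\bigvee z(e)$ over all atoms) and both exploit the remark that $Mz(e)$ is a type $\mathrm{I}$ factor to produce an atom below any nonzero projection of the atomic corner. You additionally spell out the uniqueness argument, which the paper leaves as a routine check.
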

\begin{proof}
Uniqueness of $z$ is straightforward to check.

Concerning the existence of $z$, if $M$ is diffuse, we set $z=1$. Thus, let us assume that $M$ has atoms. We define then $\mathcal Z$ which is the set of families of atoms $(e_i)_{i\in I}\subset M$ such that $z(e_i)z(e_j)=$ for all $i\not=j$. 
The set $\mathcal Z$ is non-empty since $M$ has atoms, and is ordered by inclusion: $(e_i)_{i\in I}\leq (f_j)_{j\in J}$ if and only if $I\subset J$ and $e_i=f_i$ for every $i\in I$. By Zorn's Lemma, we choose a maximal element $(e_i)_{i\in I}\in\mathcal Z$ and set
\[
1-z=\sum_{i\in I}z(e_i).
\]
Then one verifies easily that $Mz$ is diffuse and that $M(1-z)$ is atomic, by maximality.
\end{proof}

We need the following definition, which is due to J. von Neumann \cite[p. 482]{JvN}; see also \cite{vNWi}.

\begin{definition}
A topological group $G$ is \textit{minimally almost periodic} if its only continuous, finite-dimensional, irreducible, unitary representation is the one-dimensional trivial representation.
\end{definition}

It follows from \cite[Theorem 16.2.1]{DiC} that $G$ is minimally almost periodic if and only if $\AP(G)=\C$. For the reader's convenience, we recall some parts of the latter theorem in Theorem \ref{apgroup}.

The following result, which is used below for finite von Neumann algebras, seems to be new, as far as we know. We are grateful to P. de la Harpe for his suggestions in the treatment of the general case. In order to state it, let us recall from \cite[Chapter V]{Tak1} that an arbitrary von Neumann algebra $M$ acting on the separable, infinite-dimensional Hilbert space $\Hh$ admits the following direct sum decomposition:
\[
M=M_{\operatorname{I_f}}\oplus M_{\operatorname{I_\infty}}\oplus M_{\operatorname{II}}\oplus M_{\operatorname{III}}
\]
where
\begin{enumerate}
\item [(1)] $M_{\operatorname{I_f}}$ is a direct sum 
\[
M_{\operatorname{I_f}}=\bigoplus_{j\geq 1}A_j\otimes M_j(\C)
\]
with $A_j$ abelian and, for every $j\geq 1$, where $M_j(\C)$ is the finite factor of type $\operatorname{I}_j$; 
\item [(2)] $M_{\operatorname{I_\infty}}=A\overline{\otimes} B(\K)$ with $A$ abelian and where $\K$ is the separable, infinite-dimensional Hilbert space and where $N\overline{\otimes}P$ denotes the usual von Neumann algebra tensor product of the von Neumann algebras $N$ and $P$;
\item [(3)] $M_{\operatorname{II}}$ and $M_{\operatorname{III}}$ are the type $\operatorname{II}$ and $\operatorname{III}$ components of $M$ respectively. They are both diffuse.
\end{enumerate}

Lemma \ref{diffuse} implies that each $A_j$ in (1) is a direct sum $A_j=C_j\oplus D_j$ where $C_j$ is atomic, hence isomorphic either to $\ell^\infty(\N)$ or to $\C^{m_j}$ for some positive integer $m_j$, and where $D_j$ is diffuse. Rearranging the components of $M_{\operatorname{I_f}}$, we see that it is expressed as follows:
\begin{equation}\label{Ifin}
M_{\operatorname{I_f}}=\bigoplus_{k\geq 0}M_k
\end{equation}
where $M_0$ is a direct sum of tensor products $M_0=\bigoplus_{\ell\geq 1} D_\ell\otimes M_{p_\ell}(\C)$ with $D_\ell$ abelian and diffuse for every $\ell$, and where $M_k=M_{n_k}(\C)$ with $1\leq n_k<\infty$ for every $k\geq 1$.

\begin{theorem}\label{map}
Let $M\subset B(\Hh)$ be a von Neumann algebra acting on the separable, infinite-dimensional Hilbert space $\Hh$. Let 
\[
M=M_{\operatorname{I_f}}\oplus M_{\operatorname{I_\infty}}\oplus M_{\operatorname{II}}\oplus M_{\operatorname{III}}
\]
be the above decomposition of $M$, with $M_{\operatorname{I_f}}=\bigoplus_{k\geq 0}M_k$ as in (\ref{Ifin}).
Then the unitary group $U_M$ is minimally almost periodic, in other words $\AP(U_M)=\C$, if and only if the atomic part $\bigoplus_{k\geq 1}M_k$ of $M_{\operatorname{I_f}}$ is equal to $0$.
\end{theorem}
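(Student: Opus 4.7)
For the direction $(\Rightarrow)$, if some finite-dimensional factor summand $M_{n_k}(\C)$ with $n_k\geq 1$ is present in $M_{\operatorname{I_f}}$, then the normal $*$-projection $M\twoheadrightarrow M_{n_k}(\C)$ restricts to a WOT-continuous surjective group homomorphism $U_M\twoheadrightarrow U(n_k)$, giving a non-trivial continuous finite-dimensional unitary representation of $U_M$. So the atomic finite-dimensional part of $M_{\operatorname{I_f}}$ must vanish if $U_M$ is minimally almost periodic.

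For $(\Leftarrow)$, assume that part vanishes, so $M=M_0\oplus M_{\operatorname{I_\infty}}\oplus M_{\operatorname{II}}\oplus M_{\operatorname{III}}$ with $M_0=\bigoplus_{\ell\geq 1}D_\ell\otimes M_{p_\ell}(\C)$ and each $D_\ell$ diffuse abelian. Since $U_M$ is the topological direct product of the unitary groups of these summands (and of the $D_\ell\otimes M_{p_\ell}(\C)$ inside $M_0$), and since a continuous finite-dimensional irreducible unitary representation of a topological direct product decomposes, through the Bohr compactification and Peter--Weyl, as an external tensor product of factor representations, it suffices to show that each of the following has MAP unitary group: (a) $D\otimes M_p(\C)$ with $D$ diffuse abelian and $p\geq 1$; (b) $A\overline{\otimes}B(\K)$ with $\K$ separable infinite-dimensional; and (c) diffuse von Neumann algebras of type $\operatorname{II}$ or $\operatorname{III}$, reduced further to factors by central disintegration over $\operatorname{Spec}Z(M)$.

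The core of the argument is case (a) with $p=1$, namely $U_{L^\infty(X,\mu)}$ with $\mu$ diffuse. I would take a continuous character $\chi\colon U_{L^\infty(X,\mu)}\to\T$ (a continuous finite-dimensional irreducible unitary representation of an abelian group is one-dimensional) and pull it back through the continuous surjection $L^\infty(X,\R)\to L^\infty(X,\T)=U_{L^\infty(X,\mu)}$ given by $r\mapsto e^{2\pi ir}$, obtaining a continuous character $\tilde\chi\colon (L^\infty(X,\R),\sigma(L^\infty,L^1))\to \T$ that vanishes on $L^\infty(X,\mathbb{Z})$. Continuous characters of the real topological vector space $(L^\infty(X),\sigma(L^\infty,L^1))$ are of the form $v\mapsto e^{2\pi i\int_X vg\,d\mu}$ for some $g\in L^1(X,\mu)$, by the usual identification of continuous linear functionals. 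Vanishing on integer-valued indicator functions forces $\int_A g\,d\mu\in\mathbb{Z}$ for every measurable $A\subset X$; but by Lyapunov's convexity theorem the range $\{\int_A g\,d\mu:A\subset X\}$ is an interval when $\mu$ is diffuse, so this forces $g\equiv 0$ and hence $\chi\equiv 1$. The case $p\geq 2$ then follows from a measurable spectral decomposition $u=V\Lambda V^*$ with $V\in U_{D\otimes M_p(\C)}$ and $\Lambda\in L^\infty(X,\T^p)$ a diagonal unitary, since any finite-dimensional unitary representation is trivial on $\Lambda$ entry-by-entry by the $p=1$ case and is invariant under inner $V$-conjugation, forcing $\pi(u)=1$. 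For (b) and the factor case of (c), I would invoke the classical minimal-almost-periodicity of $U_{B(\K)}$ for $\K$ infinite-dimensional and of unitary groups of diffuse factors of type $\operatorname{II}_1$, $\operatorname{II}_\infty$, and $\operatorname{III}$ (in the spirit of de la Harpe's arguments), extending to non-factors via central disintegration.

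The main obstacle will be the measurable spectral decomposition in case (a) for $p\geq 2$, which requires a measurable-selection argument to produce $V$ depending measurably on the point of $X$, and the central disintegration of continuous finite-dimensional unitary representations needed to pass from factor MAP to non-factor MAP in case (c), where one must verify that triviality on almost every fibre implies triviality of the original representation (compatibly with WOT-continuity). A secondary but still delicate point is the Peter--Weyl reduction of finite-dimensional unitary representations of a topological direct product to external tensor products of factor representations: standard for locally compact groups, it needs explicit justification in the present non-locally-compact Polish setting via the identification $b(\prod_i G_i)=\prod_i bG_i$ of Bohr compactifications.
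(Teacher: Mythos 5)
Your direction $(\Rightarrow)$ is the same as the paper's (project onto a finite-dimensional factor summand), and your Lyapunov idea for the abelian diffuse case is attractive, but as written that core step has a genuine gap. You assert that the pull-back $\tilde\chi=\chi\circ\exp$ is a continuous character of $(L^\infty(X,\R),\sigma(L^\infty,L^1))$. This does not follow from the continuity of $\chi$ on $U_A$ (weak operator topology), because the exponential $r\mapsto e^{2\pi i r}$ is not continuous from $\sigma(L^\infty,L^1)$ to the weak operator topology: if $A_n\subset[0,1]$ are such that $\mathbf{1}_{A_n}\to\frac{1}{2}\mathbf{1}$ in $\sigma(L^\infty,L^1)$ (rapidly oscillating unions of intervals), then $r_n=\mathbf{1}_{A_n}-\frac{1}{2}\mathbf{1}\to 0$ in $\sigma(L^\infty,L^1)$ while $e^{2\pi i r_n}\equiv -1$ for every $n$. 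So at the point where you invoke the identification of $\sigma(L^\infty,L^1)$-continuous characters with $L^1$-functionals, there is nothing to apply it to; and with only norm continuity of $\tilde\chi$ you would get a finitely additive measure in $(L^\infty)^*$, for which integer values on all indicators do not force vanishing (consider $0$-$1$-valued finitely additive measures). The step can be repaired: on $U_A$ the weak operator topology coincides with the $\Vert\cdot\Vert_2$-topology associated with $\tau=\int\cdot\,d\mu$, the exponential is $\Vert\cdot\Vert_2$-Lipschitz, hence $\tilde\chi$ is a uniformly $\Vert\cdot\Vert_2$-continuous character of $(L^\infty(X,\R),+)$ and extends to the real Hilbert space $L^2(X,\R)$; its continuous characters are $v\mapsto e^{2\pi i\int vg\,d\mu}$ with $g\in L^2\subset L^1$, and then your Lyapunov argument does close the abelian case. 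The paper's own argument is more elementary: approximate $u\in U_A$ in norm by unitaries $\exp(2\pi i k p/m)$, split $p$ into projections of small trace, and use that an $m$-th root of unity close enough to $1$ must equal $1$.

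The obstacles you flag at the end (measurable diagonalization for $p\geq 2$, central disintegration to reduce type $\operatorname{II}$ and $\operatorname{III}$ to factors, Bohr/Peter--Weyl for the infinite product) are real difficulties of the route you chose -- in particular it is unclear how a WOT-continuous finite-dimensional representation of $U_M$ should induce representations of the fibre unitary groups in a disintegration -- but none of them is needed. The paper replaces all of them by one observation: every unitary $u$ of a diffuse von Neumann algebra $N$ lies in a maximal abelian subalgebra $A(u)$, which is diffuse by maximality, so a continuous finite-dimensional unitary representation of $U_N$ restricts to $U_{A(u)}$ as a direct sum of characters, hence is trivial there by the abelian diffuse case, whence $\pi(u)=1$ for every $u\in U_N$. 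This settles $M_0$ (including $p\geq 2$), $M_{\operatorname{II}}$, $M_{\operatorname{III}}$ and $D\overline{\otimes}B(\K)$ with $D$ diffuse in one stroke; only the type $\operatorname{I_\infty}$ summand with atomic centre needs the classification of continuous irreducible representations of $U(\K)$, as in your case (b). For the infinite direct sum you also need no Bohr compactification: restrict the representation to each block $U_{Mz_\ell}$ to get triviality there, and use that the unitaries equal to $1$ in all but finitely many blocks form a strongly dense subgroup of $U_M$.
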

\begin{proof}
If $1=\sum_\ell z_\ell$ is a partition of the unity with $z_\ell\in \Pp(Z(M))$ for every $\ell$, then $U_M$ decomposes as 
\[
U_M=\prod_\ell U_{Mz_\ell},
\]
and $U_M$ is minimally almost periodic if and only if each $U_{Mz_\ell}$ is. Hence, if the atomic part $\bigoplus_{k\geq 1}M_k$ of $M_{\operatorname{I_f}}$ is non-trivial, $U_M$ is not minimally almost periodic.

Thus the proof will be complete if we prove that all groups $U_{M_X}$ for $X \in\{0,\operatorname{I_\infty},\operatorname{II},\operatorname{III} \}$ are minimally almost periodic.

We divide the proof into three parts. 
\\
(i) Assume that $A$ is an abelian and diffuse von Neumann algebra, so that it is $*$-isomorphic to $L^\infty [0,1]$. It suffices to prove that the only continuous character $\chi:U_A\rightarrow \T$ is the trivial one. By spectral theory, the subset 
\[
S:=\Big\{\exp\Big(i\frac{2k\pi}{m}p\Big) \colon m\geq 1, 0\leq k\leq m-1, p\in \Pp(A)\Big\}
\]
generates a dense subgroup of $U_A$ in the norm topology. Thus, let us fix a continuous character $\chi$ of $U_A$, and let $u=\exp(i2k\pi p/m)\in S$. Choose $\ep>0$ small enough so that $1$ is the only $m$-th root $\om$ of the unity such that $|\om-1|<\ep$, and next choose $\delta>0$ small enough so that one has $|\chi(v)-1|<\ep$ for every $v\in U_A$ such that $\Vert v-1\Vert_2<\delta$.

Since $A$ is diffuse, there exist pairwise orthogonal projections $q_1,\ldots,q_n\in \Pp(A)$ such that $p=\sum_j q_j$ and that 
\[
\Big\Vert \exp\Big(i\frac{2k\pi}{m}q_j\Big) -1\Big\Vert_2<\delta
\]
for every $j$. Then $\chi\Big(\exp\Big(i\frac{2k\pi}{m}q_j\Big)\Big)$ is an $m$-th root of the unity which is at distance within $\ep$ to $1$, hence it is equal to $1$, and finally 
\[
\chi\Big(\exp\Big(i\frac{2k\pi}{m}p\Big)\Big)=\prod_{j=1}^n \chi\Big(\exp\Big(i\frac{2k\pi}{m}q_j\Big)\Big)=1.
\]
\\
(ii) If $N$ is a diffuse von Neumann algebra, every unitary $u\in U_N$ belongs to some maximal abelian $*$-subalgebra $A=A(u)$ of $N$, which is necessarily diffuse by maximality. Hence, if $\pi$ is a continuous, finite-dimensional, irreducible, unitary representation of $U_N$, and if $u\in U_N$, the restriction of $\pi$ to $U_{A(u)}$ is a direct sum of irreducible ones, hence it is trivial by Part (i). In particular, $\pi(u)=1$. As this is true for every $u$, the representation $\pi$ is trivial. As the components $M_{\operatorname{II}}$ and $M_{\operatorname{III}}$ of $M$ are diffuse, this proves that the groups $U_{M_X}$ are minimally almost periodic for $X\in \{0,\operatorname{II},\operatorname{III}\}$. This also proves that $U_N$ is minimally almost periodic if $N=D\overline{\otimes}B(\K)$ with $D$ abelian and diffuse.
\\
(iii) Assume at last that $N$ is a von Neumann algebra of type $\operatorname{I_\infty}$ with atomic centre. By \cite[Theorem V.1.27]{Tak1}, $N=A\overline{\otimes} B(\K)$ with $A$ abelian and atomic. It suffices to prove that the unitary group $U(\K)$ is minimally almost periodic, which seems to be known by the experts. We sketch a proof for the sake of completeness. Let us first recall the description of all strongly continuous, irreducible unitary representations of $U(\K)$ (see for instance \cite[Proposition 9]{BekkaT}): Let $\rho:U(\K)\rightarrow U(\K)$ be the tautological representation of $U(\K)$. Then every strongly continuous, irreducible unitary representation $\pi$ of $U(\K)$ is unitarily equivalent to a subrepresentation of the representation $\rho^{\otimes k}\otimes \bar{\rho}^{\otimes\ell}$ of $U(\K)$ on $\K^{\otimes k}\otimes \bar{\K}^{\otimes\ell}$ for some integers $k,\ell\geq 0$, where the case $k+\ell=0$ corresponds to the trivial representation.

If $(\pi,\Ll)$ is a non-trivial strongly continuous, irreducible unitary representation of $U(\K)$, then, conjugating $\pi$ by a suitable unitary, we assume that $\Ll$ contains a unit vector of the form $\xi^{\otimes k}\otimes\bar{\xi}^{\otimes\ell}$ with $k+\ell>0$. Then the orbit $\pi(U(\K))\xi^{\otimes k}\otimes\bar{\xi}^{\otimes\ell}=\{(u\xi)^{\otimes k}\otimes\overline{(u\xi)}^{\otimes\ell} \colon u\in U(\K)\}$ contains an infinite orthonormal system, and $\pi$ is infinite-dimensional.
\end{proof}

We are ready to determine the von Neumann algebras $M\subset B(\Hh)$ such that $M\subset \ap_\Hh(M)$.

\begin{theorem}\label{ap}
Let $M\subset B(\Hh)$ be a von Neumann algebra. Then $M$ is contained in $\ap_\Hh(M)$ if and only if $M$ is isomorphic to a direct sum $\bigoplus_{k\geq 0} M_k$ where $M_0$ is an abelian, diffuse von Neumann algebra, and where $M_k$ is a finite-dimensional factor for every $k\geq 1$.
\end{theorem}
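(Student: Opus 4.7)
The plan is to prove each implication separately, reducing the forward one to Theorem~\ref{map} and the backward one to compactness of finite-dimensional unitary groups.

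For the forward direction, assume $M \subset \ap_\Hh(M)$. Since $\AP(U_M) \subset \WAP(U_M)$, this forces $M \subset \wap_\Hh(M)$, so Theorem~\ref{3.1} yields that $M$ is finite. The structural decomposition recalled just before Theorem~\ref{map} then gives
\[
M \;=\; \Bigl(\bigoplus_\ell D_\ell \otimes M_{p_\ell}(\C)\Bigr) \oplus \bigoplus_{k\geq 1} M_{n_k}(\C),
\]
with each $D_\ell$ abelian and diffuse and each $p_\ell, n_k$ a positive integer. Proving the theorem thus reduces to showing that $p_\ell = 1$ for every $\ell$. I would argue this by contradiction: suppose some $p_\ell \geq 2$ and set $N := D_\ell \otimes M_{p_\ell}(\C)$, a direct summand of $M$ with central support $z \in Z(M)$. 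The map $u \mapsto u + (1-z)$ embeds $U_N$ as a closed subgroup of $U_M$ via a continuous group homomorphism, so restriction sends $\AP(U_M)$ into $\AP(U_N)$, and for $x\in N$ the restricted function is exactly the coefficient function of $x$ as an operator on $z\Hh$. Now $N$ is of type $\operatorname{I_f}$ with trivial atomic finite-dimensional factor summand, and $z\Hh$ is separable and infinite-dimensional because $D_\ell$ is diffuse; Theorem~\ref{map} therefore gives $\AP(U_N) = \C$. Consequently every coefficient function of $x \in N$ on $U_N$ is constant, Lemma~\ref{1.3} forces $x$ into the commutant of $N$ in $B(z\Hh)$, and combined with $x \in N$ this yields $N \subset N \cap N' = Z(N) = D_\ell$, contradicting $p_\ell \geq 2$.

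For the backward direction, assume $M = A \oplus \bigoplus_{k\geq 1} M_k$ with $A$ abelian and diffuse and each $M_k \cong M_{n_k}(\C)$, and let $z_k$ denote the central projections of the summands (with $z_0$ for $A$). Then $U_M = U_A \times \prod_{k\geq 1} U_{M_k}$ with each $U_{M_k}$ a compact Lie group, on which continuous functions are automatically almost periodic. For $x \in A$ the coefficient function is constant; for $x \in M_k$ with $k \geq 1$ one has $uxu^* = u_k x u_k^*$ where $u_k$ is the $M_k$-component of $u$, so $\xi \star x \star \eta$ is the pullback, along the continuous projection $\pi_k : U_M \to U_{M_k}$, of a continuous function on the compact group $U_{M_k}$, hence lies in $\AP(U_M)$. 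For an arbitrary $x = \sum_{k\geq 0} z_k x \in M$, a Cauchy--Schwarz estimate together with $\sum_k \|z_k \xi\|^2 = \|\xi\|^2$ shows that $\xi \star x \star \eta$ is the uniform limit of its finite partial sums, each of which is almost periodic; the norm-closedness of $\AP(U_M)$ in $C_b(U_M)$ then yields $\xi \star x \star \eta \in \AP(U_M)$, completing the proof.

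The main obstacle I expect lies in the forward direction, specifically in carrying out the reduction to Theorem~\ref{map}: one has to single out the right suspect summand $N = D_\ell \otimes M_{p_\ell}(\C)$, realize $U_N$ as a continuous subgroup of $U_M$ and verify that restriction preserves almost periodicity, and only then combine Theorem~\ref{map} with Lemma~\ref{1.3} to collapse $N$ onto its center. Once this reduction is in place, the backward direction is essentially routine, with the only delicate point being the uniform convergence of the partial-sum decomposition of the coefficient function.
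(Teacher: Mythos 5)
Your overall strategy is the paper's: Theorem \ref{3.1} to get finiteness, Theorem \ref{map} plus Lemma \ref{1.3} to collapse diffuse nonabelian pieces onto their centre, and, for the converse, uniform approximation of $\xi\star x\star\eta$ by partial sums that are almost periodic because finite-dimensional unitary groups are compact. The backward direction is correct and essentially identical to the paper's. However, the forward direction as written has a genuine gap: after obtaining that $M$ is finite you assert that the decomposition recalled before Theorem \ref{map} gives $M=\bigl(\bigoplus_\ell D_\ell\otimes M_{p_\ell}(\C)\bigr)\oplus\bigoplus_{k\geq 1}M_{n_k}(\C)$. That is not what finiteness gives: it eliminates $M_{\operatorname{I_\infty}}$ and $M_{\operatorname{III}}$, but a finite von Neumann algebra may have a nonzero type $\operatorname{II}_1$ summand (e.g.\ the hyperfinite $\operatorname{II}_1$ factor), and your displayed formula describes only $M_{\operatorname{I_f}}$. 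Since your contradiction argument only treats the summands $D_\ell\otimes M_{p_\ell}(\C)$ with $p_\ell\geq 2$, a possible type $\operatorname{II}_1$ part is never excluded, although the theorem's conclusion rules it out.

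The gap is filled by your own technique: any diffuse central summand $N=Mz$ (type $\operatorname{II}_1$ included) satisfies $\AP(U_N)=\C$ by Theorem \ref{map}, so restricting coefficient functions to $U_N$ (embedded via $u\mapsto u+(1-z)$, which is legitimate as you argue) and applying Lemma \ref{1.3} on $z\Hh$ gives $N\subset N\cap N'=Z(N)$; applied to a type $\operatorname{II}_1$ summand this is a contradiction, and applied to $D_\ell\otimes M_{p_\ell}(\C)$ it forces $p_\ell=1$. This is in fact how the paper proceeds: it takes the central projection $z$ of Lemma \ref{diffuse} splitting $M$ into a diffuse part $Mz$ and an atomic part $M(1-z)$, proves $Mz=Z(M)z$ by exactly this constancy argument (reducing to $\xi=z\xi$, $\eta=z\eta$ and invoking Theorems \ref{map} and \ref{apgroup}), and then uses that an atomic finite von Neumann algebra is a direct sum of finite-dimensional factors. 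Rephrasing your reduction in terms of the diffuse/atomic splitting, rather than the type decomposition, both repairs the argument and simplifies it, since all diffuse summands are handled at once.
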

\begin{proof}
Assume first that $M$ is contained in $\ap_\Hh(M)$. As the latter space is contained in $\wap_\Hh(M)$, $M$ is a finite von Neumann algebra by Theorem \ref{3.1}. Let $z$ be the central projection such that $Mz$ is diffuse and $M(1-z)$ is atomic. We are going to prove that $Mz=Z(M)z$. In order to do that, let $x=xz\in Mz$. It suffices to prove that $\xi\star x\star\eta$ is constant for all $\xi,\eta\in\Hh$. Indeed, if it is the case, then $x\in M'\cap Mz=Z(M)z$ which is abelian and diffuse. Thus, let us fix $\xi,\eta\in \Hh$. From the equality $\xi\star x\star\eta=\xi\star xz\star\eta=(z\xi)\star x\star (z\eta)$, we assume further that $\xi=z\xi$ and $\eta=z\eta$. Replacing $M$ by $Mz$, we assume henceforth that $M$ is diffuse.

As $x\in \ap_\Hh(M)$, the orbit $U_{M}\xi\star x\star\eta$ is relatively compact. By Theorems \ref{map} and \ref{apgroup}, $\xi\star x\star\eta$ is constant.

Conversely, if $M$ is isomorphic to a direct sum $M_0\oplus\bigoplus_{k\geq 1} M_k$ where $M_0$ is an abelian, diffuse von Neumann algebra and each $M_k$ is a factor of type $\mathrm{I}_{n_k}$, say, with $1\leq n_k<\infty$ for every $k$, let $(z_k)_{k\geq 0}$ be the partition of $1$ formed by the central projections such that $Mz_k$ is isomorphic to $M_k$ for all $k\geq 0$. Thus, $\Hh=\bigoplus_{k\geq 0}\Hh_k$ with $\Hh_k=z_k\Hh$ for all $k$. Let then $x=\bigoplus_{k\geq 0}x_k\in M$ and $\xi=\bigoplus_k \xi_k,\eta=\bigoplus_k \eta_k\in \Hh$. Then with respect to the norm topology in $\cbr(U_M)$,
\[
\xi\star x\star\eta=\langle x_0\xi_0|\eta_0\rangle+\lim_{N\to\infty}\xi^{(N)}\star x^{(N)}\star\eta^{(N)}
\]
with $\xi^{(N)}\star x^{(N)}\star\eta^{(N)}\in \ap(U_{M^{(N)}})$ where $M^{(N)}=\bigoplus_{k=1}^N M_k$ is finite-dimensional. Indeed, one has for every $u=\bigoplus_k u_k\in U_M$:
\[
\xi\star x\star\eta(u)=\sum_{k=0}^N \langle u_kxz_ku_k^*\xi_k|\eta_k\rangle
+\sum_{k>N}\langle u_k xz_ku_k^*\xi_k|\eta_k\rangle
\]
and 
\[
\Big|\sum_{k>N}\langle u_kx_ku_k^*\xi_k|\eta_k\rangle\Big|\leq\Vert x\Vert \sum_{k>N}\Vert\xi_k\Vert\Vert\eta_k\Vert\to_{N\to\infty}0
\]
by Cauchy-Schwarz inequality since $\sum_k \Vert\xi_k\Vert^2<\infty$ and $\sum_k \Vert \eta_k\Vert^2<\infty$.
\end{proof}

Contrary to $\wap_\Hh(M)$ which always contains at least $K(\Hh)+\C$, $\ap_\Hh(M)$ can be very small, as the following proposition shows.

\begin{proposition}\label{trivial}
Suppose that $\Hh$ is infinite-dimensional. Then 
\[
\ap_\Hh(\B(\Hh))=\C.
\]
\end{proposition}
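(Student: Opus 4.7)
The proof is essentially a two-line application of earlier results, so the plan is short.

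First, observe that $\C \subset \ap_\Hh(B(\Hh))$ is automatic: the coefficient functions of $\lambda \cdot 1$ are constant, hence trivially almost periodic. For the reverse inclusion, the strategy is to combine Theorem \ref{map} with Lemma \ref{1.3}.

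Apply Theorem \ref{map} to $M = B(\Hh)$ with $\Hh$ separable and infinite-dimensional: in the type decomposition $M = M_{\operatorname{I_f}} \oplus M_{\operatorname{I_\infty}} \oplus M_{\operatorname{II}} \oplus M_{\operatorname{III}}$, one has $M = M_{\operatorname{I_\infty}} = \C \overline{\otimes} B(\Hh)$ and the atomic part of $M_{\operatorname{I_f}}$ is $0$. Thus the hypothesis of Theorem \ref{map} is satisfied, and $U(\Hh) := U_{B(\Hh)}$ is minimally almost periodic. By \cite[Theorem 16.2.1]{DiC} (as recorded in the discussion preceding Theorem \ref{map}), this is equivalent to $\AP(U(\Hh)) = \C$.

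Now let $T \in \ap_\Hh(B(\Hh))$. By definition, for all $\xi, \eta \in \Hh$ the coefficient function $\xi \star T \star \eta$ belongs to $\AP(U(\Hh)) = \C$, i.e.\ is constant on $U(\Hh)$. By Lemma \ref{1.3}, this forces $T \in B(\Hh)' = \C \cdot 1$, which completes the proof.

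There is no real obstacle here: all the heavy lifting has already been done in Theorem \ref{map} (specifically part (iii) of its proof, which establishes minimal almost periodicity of $U(\K)$ for infinite-dimensional $\K$). The statement of Proposition \ref{trivial} is simply the specialization of that result to $M = B(\Hh)$, repackaged through the dictionary provided by Lemma \ref{1.3}.
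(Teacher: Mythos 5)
Your proof is correct, and it takes a genuinely different route from the paper's. You deduce the statement from Theorem \ref{map}: for $M=B(\Hh)$ with $\Hh$ separable and infinite-dimensional the finite type $\operatorname{I}$ summand vanishes, so $U(\Hh)$ is minimally almost periodic and $\AP(U(\Hh))=\C$; hence every coefficient function of a $T\in\ap_\Hh(B(\Hh))$ is constant, and Lemma \ref{1.3} forces $T\in B(\Hh)'=\C 1$. There is no circularity, since Proposition \ref{trivial} is nowhere used in the proof of Theorem \ref{map}, and your mechanism is exactly the one the paper itself uses for the diffuse summand in the proof of Theorem \ref{ap}, so your argument is a natural specialization of that. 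The paper's own proof of Proposition \ref{trivial} is more hands-on: it first invokes Theorem \ref{2.6} to get $\ap_\Hh(B(\Hh))\subset\wap_\Hh(B(\Hh))=K(\Hh)+\C$, and then shows directly that no nonzero positive compact operator $T$ lies in $\ap_\Hh(B(\Hh))$, by producing explicit permutation-type unitaries $v_n$ for which the translates $v_n\cdot(\ep_1\star T\star\ep_1)$ stay uniformly at distance at least $\lm_1/2$ from one another, so the orbit is not norm relatively compact. The trade-off: your route is shorter but rests on the representation-theoretic input behind step (iii) of the proof of Theorem \ref{map} (the classification of continuous irreducible unitary representations of $U(\K)$), whereas the paper's route is elementary and self-contained modulo Theorem \ref{2.6}, and it exhibits a concrete quantitative witness of why compact operators fail to be almost periodic with respect to $B(\Hh)$.
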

\begin{proof}
We already know from Theorem \ref{2.6} that $\ap_\Hh(B(\Hh))\subset K(\Hh)+\C$. Thus, let $T\in K(\Hh)$ be a non-zero, positive operator. Let $(\lm_j)_{j\geq 1}\subset \R^*_+$ be the sequence of positive eigenvalues of $T$ so that $\lm_1\geq \lm_2\geq\ldots$ and $\lm_j\to_{j\to\infty}0$. Let $(\ep_j)_{j\geq 1}\subset \Hh$ be an orthonormal system such that $T\ep_j=\lm_j\ep_j$ for every $j$. Then, denoting by $P_0$ the orthogonal projection onto $\ker(T)$ and by $P_\xi$ the rank-one projection onto $\C\xi$ for every unit vector $\xi$, we have
\[
T=\sum_{j\geq 1}^{\infty}\lm_j P_{\ep_j}\quad \textrm{and}\quad TP_0=P_0T=0
\]
where the series converges in the norm topology. Thus we get for all $u,v\in U(\Hh)$:
\begin{align*}
v\cdot \ep_1\star T\star\ep_1(u)
&=
(v\ep_1)\star T\star (v\ep_1)(u)=\langle Tu^*v\ep_1|u^*v\ep_1\rangle\\
&=
\sum_{j\geq 1} \lm_j \langle P_{\ep_j}(u^*v\ep_1)|u^*v\ep_1\rangle\\
&=
\sum_{j\geq 1} \lm_j|\langle u^*v\ep_1|\ep_j\rangle|^2
=
\sum_{j\geq 1} \lm_j|\langle v\ep_1|u\ep_j\rangle|^2.
\end{align*}
Let then $N>1$ be large enough so that $0\leq \lm_j\leq \lm_1/2$ for every $j\geq N$, and define $(v_n)_{n\geq N}\subset U(\Hh)$ such that $v_n$ is the identity on $P_0\Hh$ and
\[
v_n\ep_j=
\begin{cases}
\ep_j & j\not=1,n\\
\ep_n & j=1\\
\ep_1 & j=n.
\end{cases}
\]
Then, as $\ep_1\star T\star\ep_1(1)=\lm_1$, and as $v_n^*v_m\ep_1=v_n^*\ep_m=\ep_m$
for all $n,m\geq N$, $n\not=m$, we get:
\begin{align*}
\Vert v_m\cdot \ep_1\star T\star\ep_1-v_n\cdot \ep_1\star T\star\ep_1\Vert_\infty
&=
\Vert \ep_1\star T\star\ep_1-v_m^*v_n\cdot \ep_1\star T\star\ep_1\Vert_\infty\\
&\geq
|\ep_1\star T\star\ep_1(1)-\ep_1\star T\star\ep_1(v_n^*v_m)|\\
&=
\Big|\lm_1-\sum_j\lm_j\langle \ep_m|\ep_j\rangle\Big|\\
&=
\lm_1-\lm_m\geq \lm_1/2>0.
\end{align*}
This shows that the orbit $U(\Hh)\ep_1\star T\star\ep_1$ is not relatively compact in the norm topology of $C_b(U(\Hh))$, hence that $T\notin \ap_\Hh(B(\Hh))$.
\end{proof}

\begin{remark}
As for $\wap_\Hh(M)$, we do not know whether $\ap_\Hh(M)$ is a $C^*$-algebra.
\end{remark}

\section{Appendix: Weakly almost periodic functions on topological groups}

As promised in \S 1, the aim of this appendix is to give a sketched proof of the existence of a unique invariant mean on $\WAP(G)$, where $G$ is an arbitrary topological group. We keep notation that were settled in \S 1.

Let us recall first the following two theorems of A. Grothendieck.

\begin{theorem}\label{convcompacte}\cite[Th\'eor\`eme 5]{Grothen}
Let $\Om$ be a compact space and let $A\subset C(\Om)$ be a bounded set. Then $A$ is relatively weakly compact if and only if, for every sequence $(f_n)\subset A$, there exists a subsequence $(f_{n_k})$ and an element $h\in C(\Om)$ such that 
\[
\lim_k f_{n_k}(\om)=h(\om)
\]
for every $\om\in\Om$.
\end{theorem}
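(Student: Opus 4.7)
The plan is to prove Grothendieck's theorem by combining three classical tools: the Riesz representation theorem identifying $C(\Om)^*$ with the space $M(\Om)$ of regular complex Borel measures on $\Om$, the Eberlein--\v{S}mulian theorem (weak compactness and weak sequential compactness coincide in any Banach space), and the dominated convergence theorem.

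For the easy direction, assume $A$ is relatively weakly compact. Given $(f_n)\subset A$, Eberlein--\v{S}mulian produces a subsequence $(f_{n_k})$ weakly convergent to some $h\in C(\Om)$. Since each Dirac measure $\delta_\om$ is a continuous linear functional on $C(\Om)$, weak convergence forces $f_{n_k}(\om)=\delta_\om(f_{n_k})\to\delta_\om(h)=h(\om)$ for every $\om\in\Om$.

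For the converse, suppose every sequence in $A$ admits a pointwise convergent subsequence whose limit lies in $C(\Om)$. By Eberlein--\v{S}mulian it suffices to show that every sequence $(f_n)\subset A$ has a weakly convergent subsequence. Extract $(f_{n_k})$ with $f_{n_k}\to h$ pointwise and $h\in C(\Om)$. Since $A$ is norm-bounded by some $M$, one has $|f_{n_k}|\leq M$ on $\Om$, and the constant $M$ is integrable against every finite regular Borel measure on the compact space $\Om$. Thus, for $\mu\in M(\Om)$, applying the dominated convergence theorem to the four positive measures arising in the Jordan decomposition of $\mu$ gives $\int f_{n_k}\,d\mu\to\int h\,d\mu$. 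Because $M(\Om)=C(\Om)^*$, this is precisely weak convergence $f_{n_k}\to h$ in $C(\Om)$.

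The central subtlety, and the only genuinely delicate point, is the continuity requirement on the pointwise limit. Without it, the equivalence fails: the sequence $f_n(x)=x^n$ in $C([0,1])$ is norm-bounded and pointwise convergent, yet its (discontinuous) limit prevents any weak cluster point from lying in $C([0,1])$, so $\{f_n\}$ is not relatively weakly compact. The hypothesis $h\in C(\Om)$ is exactly what allows Eberlein--\v{S}mulian to bridge pointwise and weak convergence, by excluding such pathological subsequential behaviour; everything else is a standard dominated convergence argument against the dual identification $C(\Om)^*=M(\Om)$.
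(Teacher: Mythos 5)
Your argument is correct and follows essentially the same route as the paper's sketch: Eberlein--\v{S}mulian to reduce weak compactness to weak sequential compactness, the Riesz identification of $C(\Om)^*$ with regular measures, and dominated convergence to pass from pointwise to weak convergence, with the easy direction handled by evaluation functionals. Your version merely spells out the details (and adds the instructive $x^n$ example) that the paper leaves implicit.
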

\begin{proof} (Sketch) The proof rests on Eberlein-Smulian theorem \cite[Theorem A.12]{Glas} which states that if $A$ is a bounded subset of a Banach space $X$, then $A$ is relatively weakly compact if and only if every sequence in $A$ has a subsequence which converges weakly in $X$.

Thus, if a bounded sequence $(f_n)$ converges pointwise to the limit $h$, then, by Lebesgue theorem, $\int f_nd\mu\to \int hd\mu$ for every regular complex measure $\mu$ on $\Om$. Compactness of $\Om$ implies that every continuous linear functional on $C(\Om)$ is such a measure, hence relative compactness in the pointwise convergence topology implies relatively weak compactness. The converse is obvious, as every linear form of the type $f\mapsto f(\om)$ is weakly continuous.
\end{proof}

Using the Stone-Cech compactification of $G$, A. Grothendieck gets the following theorem which we state in a slightly different, though equivalent, form.
 
\begin{theorem}\label{Grothendieck}\cite[Th\'eor\`eme 6]{Grothen}
Let $G$ be an arbitrary topological group. Then a bounded subset $A$ of $C_b(G)$ is weakly relatively compact if and only if for all sequences 
$(x_i)\subset G$ and $(f_j)\subset A$ such that the following limits 
\[
\lim_i(\lim_j f_j(x_i))\quad\textrm{and}\quad
\lim_j(\lim_i f_j(x_i))
\]
both exist, then they are equal.
\end{theorem}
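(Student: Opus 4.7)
The plan is to reduce the statement to the compact case provided by Theorem \ref{convcompacte}, by passing to the Stone-\v{C}ech compactification. Every topological group is Tychonoff, so $G$ admits a Stone-\v{C}ech compactification $\beta G$, and extension by continuity identifies $C_b(G)$ isometrically with $C(\beta G)$. This identification preserves the weak topology (both spaces have the bounded regular complex Borel measures on $\beta G$ as their norm dual), so a bounded subset $A\subset C_b(G)$ is weakly relatively compact if and only if, viewed in $C(\beta G)$, every sequence in $A$ has a subsequence converging pointwise on $\beta G$ to a function in $C(\beta G)$, by Theorem \ref{convcompacte}.

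For the direct implication, I assume $A$ is weakly relatively compact and let $(x_i)\subset G$, $(f_j)\subset A$ be sequences such that both iterated limits $\ell_1:=\lim_i\lim_j f_j(x_i)$ and $\ell_2:=\lim_j\lim_i f_j(x_i)$ exist. Applying Eberlein-\v{S}mulian I extract a subsequence $(f_{j_k})$ converging weakly, hence pointwise on $\beta G$, to some $f\in C(\beta G)$. Since the $j$-limit $\lim_j f_j(x_i)$ is already assumed to exist, it must equal $f(x_i)$ for every $i$. I then pick a cluster point $y\in\beta G$ of $(x_i)$ (exists by compactness of $\beta G$) and a subnet $x_\alpha\to y$. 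Continuity of $f$ on $\beta G$ forces $\ell_1=\lim_i f(x_i)=f(y)$, while continuity of each $f_j$ on $\beta G$ forces $\lim_i f_j(x_i)=f_j(y)$, so that $\ell_2=\lim_j f_j(y)=\lim_k f_{j_k}(y)=f(y)$, whence $\ell_1=\ell_2$.

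For the converse I argue by contrapositive. Suppose $A$ is not weakly relatively compact. Theorem \ref{convcompacte} applied in $C(\beta G)$ produces a sequence $(g_j)\subset A$ no subsequence of which converges pointwise on $\beta G$ to a function in $C(\beta G)$. A standard diagonal extraction then produces a subsequence $(f_j)$ of $(g_j)$ and a sequence $(x_i)\subset G$, obtained by approximating suitable points of $\beta G$ by elements of the dense subset $G$, such that both $\lim_i\lim_j f_j(x_i)$ and $\lim_j\lim_i f_j(x_i)$ exist but take different values.

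The main obstacle is precisely this last extraction. The failure of weak relative compactness is detected a priori only in the large compactification $\beta G$, and one must realise witnesses to that failure as honest sequences in $G$ itself. This is settled by a Ramsey-type alternation argument: one alternately chooses a new index in $G$ (using density in $\beta G$) and a new term of the sequence, in such a way that the two resulting iterated limits of the double array $(f_j(x_i))$ separate by a definite amount.
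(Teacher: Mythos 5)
Your reduction to $C(\beta G)$ and your proof of the forward implication are correct: the isometric identification $C_b(G)\cong C(\beta G)$ carries the weak topology over, Eberlein--\v{S}mulian yields a weakly (hence pointwise on $\beta G$) convergent subsequence, and your cluster-point argument at $y\in\beta G$ does give $\ell_1=f(y)=\ell_2$. For context, the paper offers no proof of this statement at all: it is quoted as Grothendieck's Th\'eor\`eme 6 with only the remark that it follows from Theorem \ref{convcompacte} via the Stone--\v{C}ech compactification, so the only yardstick for your attempt is completeness.

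The converse direction, however, is not proved. The step ``a standard diagonal extraction then produces a subsequence $(f_j)$ and a sequence $(x_i)\subset G$ with distinct iterated limits'' is not standard; it is precisely the content of Th\'eor\`eme 6 beyond Th\'eor\`eme 5, and you acknowledge this (``the main obstacle is precisely this last extraction'') before dismissing it with an unexecuted ``Ramsey-type alternation argument''. Concretely, Theorem \ref{convcompacte} only hands you a sequence $(g_j)\subset A$ no subsequence of which converges pointwise on $\beta G$ to a continuous limit; this can fail in two different ways (pointwise convergence to a discontinuous function, or no pointwise convergence at all), and in either case one must first pass to a pointwise cluster function $f$ of $(g_j)$ in the compact product $\C^{\beta G}$ (a net, not a sequence, argument), locate a point of $\beta G$ at which $f$ is discontinuous, and then run an interleaved selection that alternately chooses points of the dense subset $G$ (using continuity of each $g_j$ on $\beta G$ to transfer values from $\beta G$ down to $G$) and further indices $j$, extracting additional subsequences along the way (boundedness alone is what guarantees the inner and outer limits exist) so that the two iterated limits of the array $(f_j(x_i))$ exist and are separated by a definite amount. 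None of this selection is defined in your proposal, and no separating estimate is produced, so the hard half of the equivalence remains an assertion rather than a proof.
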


As a consequence, he gets the following criterion for weakly almost periodic functions on topological groups.

\begin{proposition}\label{criterewap}
Let $G$ be a topological group and let $f\in C_{b,r}(G)$. Then $f$ is weakly almost periodic if and only if for all sequences $(x_i),(y_j)$ in $G$ such that the following limits 
\[
\lim_i(\lim_j f(x_iy_j))\quad\textrm{and}\quad
\lim_j(\lim_i f(x_iy_j)),
\]
both exist, then they are equal. In particular, the left orbit $Gf$ is relatively weakly compact if and only if the right orbit $fG$ is.
\end{proposition}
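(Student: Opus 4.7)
The plan is to deduce Proposition \ref{criterewap} in a single move from Grothendieck's double-limit criterion, Theorem \ref{Grothendieck}, applied twice: once to the right orbit $A:=fG$ and once to the left orbit $A:=Gf$. Both orbits are bounded subsets of $C_b(G)$ because left and right translations are isometries for $\Vert\cdot\Vert_\infty$, and by the Hahn--Banach argument recalled in \S 1, each orbit is weakly relatively compact in $C_b(G)$ if and only if it is in $\cbr(G)$; hence it suffices to work inside $C_b(G)$.

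For the right orbit, a generic element has the form $f\cdot y$ with $(f\cdot y)(x)=f(xy)$, so for sequences $(x_i)\subset G$ and $(f\cdot y_j)\subset fG$ the values $(f\cdot y_j)(x_i)$ are exactly $f(x_iy_j)$, and Theorem \ref{Grothendieck} reads: $fG$ is weakly relatively compact in $C_b(G)$ if and only if, for all sequences $(x_i),(y_j)$ in $G$ for which both iterated limits $\lim_i\lim_j f(x_iy_j)$ and $\lim_j\lim_i f(x_iy_j)$ exist, they coincide. For the left orbit one has $(y\cdot f)(x)=f(y^{-1}x)$, so the same theorem with $A:=Gf$ yields the analogous equivalence with $f(y_j^{-1}x_i)$ in place of $f(x_iy_j)$; substituting $a_n:=y_n^{-1}$ and $b_m:=x_m$ and interchanging the names of the outer and inner indices, this rewrites precisely as the condition obtained for $fG$, since the displayed condition is symmetric under a swap of the two index sequences.

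Combining the two equivalences proves both parts of the proposition at once: $f\in\WAP(G)$, meaning $Gf$ is weakly relatively compact, is equivalent to the displayed double-limit condition, which in turn is equivalent to weak relative compactness of $fG$; this yields the ``in particular'' clause. The only delicate point is the bookkeeping in the second application of Theorem \ref{Grothendieck}: one has to invert the sequence $(y_j)$ and check that swapping which sequence plays the outer role in the iterated limit leaves the criterion unchanged. Once this symmetry is observed, the proof reduces to a direct reading of Theorem \ref{Grothendieck}.
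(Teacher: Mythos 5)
Your argument is correct and is exactly the route the paper intends: Proposition \ref{criterewap} is stated there as a direct consequence of Grothendieck's double-limit criterion (Theorem \ref{Grothendieck}), applied to the bounded orbits $Gf$ and $fG$, and your index bookkeeping (inverting $y_j$ and using the symmetry of the ``both iterated limits exist and agree'' condition) correctly supplies the details the paper leaves to the reader. Nothing further is needed.
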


\begin{proposition}\label{4.3}
Let $G$ be as above. The set $\WAP(G)$ is a unital $C^*$-subalgebra of $\cbr(G)$ which is left- and right-invariant under translations of $G$. 
\end{proposition}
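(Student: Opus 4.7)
The plan is to use the double-limit criterion of Proposition \ref{criterewap} as the main engine, reducing every property to statements about iterated scalar limits $\lim_i \lim_j f(x_i y_j)$.

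Several items are straightforward. The constants lie in $\WAP(G)$ because their left orbits are singletons, and $\WAP(G)$ is a linear subspace since $G(\alpha f + \beta g) \subset \alpha Gf + \beta Gg$ and finite linear combinations of weakly relatively compact sets remain weakly relatively compact (by continuity of addition and scalar multiplication in the weak topology). Closure under complex conjugation is immediate from the criterion: the iterated limits of $\bar f(x_i y_j)$ are the complex conjugates of those of $f(x_i y_j)$, so equality of the two iterated limits is preserved. For translation invariance, the identity $h \cdot (g \cdot f) = (hg) \cdot f$ gives $G(g \cdot f) = Gf$, while $h \cdot (f \cdot g) = (h \cdot f) \cdot g$ gives $G(f \cdot g) = (Gf) \cdot g$, and the latter is the image of $Gf$ under the isometric (hence weakly continuous) right-translation operator on $\cbr(G)$, so weak relative compactness is preserved.

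The main step is closure under pointwise multiplication. Given $f, g \in \WAP(G)$ and sequences $(x_i), (y_j) \subset G$ along which both iterated limits of $(fg)(x_i y_j)$ exist, I would pass by a standard diagonal extraction (using boundedness of $f$ and $g$ and Bolzano--Weierstrass) to subsequences along which both iterated limits of $f(x_i y_j)$ and both iterated limits of $g(x_i y_j)$ also exist; along these subsequences, the original iterated limits of $(fg)(x_i y_j)$ are automatically preserved. Since products of convergent scalar sequences converge to the product of their limits, each iterated limit of $fg$ factors as the product of the corresponding iterated limits of $f$ and $g$. Applying Proposition \ref{criterewap} to the hypothesis $f, g \in \WAP(G)$ then forces equality of the two iterated limits of $fg$.

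Norm-closedness is handled similarly. If $f_n \to f$ uniformly with $f_n \in \WAP(G)$, and if $(x_i), (y_j)$ produce iterated limits $A = \lim_i \lim_j f(x_i y_j)$ and $B = \lim_j \lim_i f(x_i y_j)$, a diagonal extraction over $n$ yields subsequences along which all iterated limits of every $f_n$ exist (and the values $A$, $B$ are preserved). WAP of $f_n$ gives a common value $C_n$ for its two iterated limits, and the uniform bounds $|A - C_n|, |B - C_n| \leq \Vert f - f_n \Vert_\infty \to 0$ yield $A = B$. The only delicate point in the entire argument is this joint diagonal extraction in the product and norm-closedness steps, but it is a routine application of Bolzano--Weierstrass to bounded scalar sequences.
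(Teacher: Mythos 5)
Your proof is correct, but it takes a genuinely different route from the paper's. You run everything through the double-limit criterion of Proposition \ref{criterewap}: linearity, conjugation, constants and translation invariance are handled as in the paper (weak continuity of the isometric translation operators), while closure under products and under uniform limits is reduced to simultaneous diagonal extractions for bounded scalar double sequences, so that both iterated limits of $f$ and of $g$ exist along common subsequences and the iterated limits of $fg$ factor accordingly; this joint extraction (extract in $j$ so that all inner limits over $j$ exist, then in $i$, then once more in $j$) is indeed a routine Bolzano--Weierstrass argument, and your $2\Vert f-f_n\Vert_\infty$ estimate settles norm-closedness. The paper instead transfers the problem to the Gelfand spectrum $\Om$ of $\cbr(G)$, identifies $\cbr(G)$ with $C(\Om)$ so that weak relative compactness of orbits becomes weak relative compactness in $C(\Om)$, and invokes Theorem \ref{convcompacte} (relative weak compactness is equivalent to every sequence having a pointwise convergent subsequence with continuous limit); with that in place the $*$-algebra property is nearly immediate, since pointwise limits of products are products of pointwise limits, and norm-closedness again follows from a diagonal argument in $C(\Om)$. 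Your version is more elementary and stays on $G$, resting on Proposition \ref{criterewap} (itself a consequence of Theorem \ref{Grothendieck}) rather than on the compactification; the paper's version buys the algebraic closure properties almost for free once the transfer to $C(\Om)$ is set up. One small point you should make explicit: Proposition \ref{criterewap} is stated for functions in $\cbr(G)$, so to apply it to $fg$ and to the uniform limit $f$ you should note that these lie in $\cbr(G)$, which is immediate since $\cbr(G)$ is a closed $*$-subalgebra of $C_b(G)$.
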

\begin{proof}
It is clear that $\WAP(G)$ contains all constant functions, and that $\overline{f}\in\WAP(G)$ if $f\in\WAP(G)$. Moreover, for fixed $g\in G$, the maps $f\mapsto g\cdot f$ and $f\mapsto f\cdot g$ are clearly weakly continuous, hence $g\cdot f,f\cdot g\in \WAP(G)$ if $f\in\WAP(G)$.

In order to prove that $\WAP(G)$ is a $C^*$-algebra, we are going to apply Theorem \ref{convcompacte}.
In order to do that, let $\Om$ be the Gelfand spectrum of the unital $C^*$-algebra $\cbr(G)$. It is a compact space, and Gelfand transform $f\mapsto \hat{f}: \chi\mapsto \chi(f)$ is a $*$-isomorphism from $\cbr(G)$ onto $C(\Om)$, and the weak topology on $\cbr(G)$ corresponds to that on $C(\Om)$. Furthermore, $G$ acts continuously on $\Om$ by $g\cdot\chi(f)=\chi(g^{-1}\cdot f)$. Indeed, if $\chi_i\to \chi$ and $g_j\to 1$, one has
\begin{align*}
|\chi(f)-\chi_i(g_j^{-1}\cdot f)|
&\leq
|\chi(f)-\chi_i(f)|+|\chi_i(f)-\chi_i(g_j^{-1}\cdot f)|\\
&\leq
|\chi(f)-\chi_i(f)|+\Vert f-g_j^{-1}\cdot f\Vert_\infty\\
=&
|\chi(f)-\chi_i(f)|+\Vert g_j\cdot f-f\Vert_\infty\to 0
\end{align*}
as $i,j\to\infty$. Thus the image $\widehat{\WAP(G)}$ of $\WAP(G)$ under the Gelfand transform is exactly the set of elements $f\in C(\Om)$ for which $Gf$ is relatively weakly compact. The fact that $\WAP(G)$ is a $*$-algebra is a straightforward consequence of Theorem \ref{convcompacte}.

Let us prove finally that $\WAP(G)$ closed in $\cbr(G)$ or, what amounts to be the same, that $\widehat{\WAP(G)}$ is closed in $C(\Om)$: let $(f_n)_{n\geq 1}\subset \widehat{\WAP(G)}$ be a sequence which converges to $f\in C(\Om)$. Let us show that $Gf$ is relatively weakly compact. In order to do that, let $(g_k)_{k\geq 1}\subset G$ be a sequence. We are going to prove that there exists a subsequence $(g_{k_j})_{j\geq 1}\subset (g_k)$ and an element $h\in C(\Om)$ such that $g_{k_j}\cdot f(\om)\to h(\om)$ for every $\om\in\Om$. By the standard diagonal process, there exist a subsequence $(g_{k_j})\subset (g_k)$ and a sequence $(h_\ell)_{\ell\geq 1}\subset C(\Om)$ such that
\[
\lim_{j\to\infty} g_{k_j}\cdot f_\ell(\om)=h_\ell(\om)
\]
for every $\om\in\Om$ and every $\ell\geq 1$. It is easy to prove that $(h_\ell)$ is a Cauchy sequence in $C(\Om)$ and then that $(h_\ell)$ converges in norm to a limit denoted by $h\in C(\Om)$. Finally, one proves that, for every $\om\in\Om$, $g_{k_j}\cdot f(\om)\to_{j\to\infty}h(\om)$.
\end{proof}

Here is now the promised theorem on the existence of the unique bi-invariant mean on $\WAP(G)$.

\begin{theorem}\label{moy}
There exists a unique linear functional $\mf:\WAP(G)\rightarrow\C$ with the following properties:
\begin{enumerate}
\item [(a)] $\mf(f)\geq 0$ for every $f\geq 0$;
\item [(b)] $\mf(1)=1$;
\item [(c)] $\mf(g\cdot f)=\mf(f\cdot g)=\mf(f)$ for all $f\in\WAP(G)$ and 
$g\in G$;
\item [(d)] for every $f\in\WAP(G)$ and every $\ep>0$, there exists a convex combination $\psi:=\sum_{j=1}^m t_j{g_j}\cdot f$ (with $g_j\in G$ and $t_j\geq 0$, $\sum_j t_j=1$) such that 
$
\Vert\psi-\mf(f)\Vert_\infty<\ep,
$
and there exists a convex combination $\f:=\sum_i s_i f\cdot h_i$ (with $h_i\in G$ and $s_i\geq 0$, $\sum_i s_i=1$) such that
$
\Vert \f-\mf(f)\Vert_\infty<\ep.
$
\end{enumerate}
\end{theorem}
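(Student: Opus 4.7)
The plan is to construct $\mf$ via the Ryll-Nardzewski fixed point theorem, then to obtain uniqueness (and simultaneously bi-invariance) from a direct $\ep$-argument that compares left- and right-translation fixed points. Fix $f\in\WAP(G)$. Denote by $K_f^L$ (resp.\ $K_f^R$) the weak closure in $C_b(G)$ of $\co(Gf)$ (resp.\ $\co(fG)$). Each is weakly compact and convex: Krein's theorem promotes weak relative compactness of the orbit to weak compactness of the closed convex hull. The left-translation action of $G$ on $K_f^L$ is by affine, weakly continuous maps (linear isometries are weakly continuous) and is noncontracting in norm because translations are isometries: $\Vert g\cdot h_1 - g\cdot h_2\Vert_\infty=\Vert h_1-h_2\Vert_\infty$. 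Hence Ryll-Nardzewski yields a left-invariant $c_L\in K_f^L$, which (being left-invariant) must be a constant. Symmetrically one obtains a constant $c_R\in K_f^R$.

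The key step is to show $c_L=c_R$. Given $\ep>0$, Mazur's theorem identifies weak and norm closures of convex sets, so there exist $t_i,s_j\geq 0$ with $\sum_i t_i=\sum_j s_j=1$ and $g_i,h_j\in G$ such that
\[
\Big\Vert c_L-\sum_i t_i\, g_i\cdot f\Big\Vert_\infty<\ep
\quad\text{and}\quad
\Big\Vert c_R-\sum_j s_j\, f\cdot h_j\Big\Vert_\infty<\ep.
\]
Evaluating the first estimate at each $h_j$ and averaging with weights $s_j$ gives $|c_L-\sum_{i,j}t_is_jf(g_i^{-1}h_j)|<\ep$; evaluating the second at each $g_i^{-1}$ and averaging with weights $t_i$ gives the same bound for $c_R$. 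Hence $|c_L-c_R|<2\ep$, so $c_L=c_R=:\mf(f)$, and the same argument shows that \emph{any} constant in $K_f^L$ or $K_f^R$ equals this common value, which pins down uniqueness. Property (a) follows because constants in $K_f^L$ are limits of convex combinations of translates of $f\geq 0$; (b) is trivial; (c) holds since $G(g\cdot f)=Gf$ and $(f\cdot g)G=fG$; (d) is exactly Mazur's equality of weak and norm closures.

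For linearity of $\mf$ I would apply the construction in the product Banach space. Given $f_1,f_2\in\WAP(G)$ and $\alpha,\beta\in\C$ (with $\alpha f_1+\beta f_2\in\WAP(G)$ by Proposition~\ref{4.3}), let $K:=\overline{\co\{(g\cdot f_1, g\cdot f_2):g\in G\}}^w\subset C_b(G)\oplus C_b(G)$. This is weakly compact and convex, and the diagonal left action of $G$ is affine, weakly continuous, and noncontracting (isometry on each factor). Ryll-Nardzewski produces a fixed point $(c_1,c_2)$ whose coordinates are constants, necessarily $\mf(f_1)$ and $\mf(f_2)$ by the uniqueness already obtained. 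Mazur then supplies a \emph{single} convex combination $\sum_i t_i(g_i\cdot f_1,g_i\cdot f_2)$ simultaneously approximating $(\mf(f_1),\mf(f_2))$, so $\sum_i t_i\, g_i\cdot(\alpha f_1+\beta f_2)$ approximates $\alpha\mf(f_1)+\beta\mf(f_2)$ in norm. Since this constant lies in $\overline{\co(G(\alpha f_1+\beta f_2))}^{\Vert\cdot\Vert}$, uniqueness forces $\mf(\alpha f_1+\beta f_2)=\alpha\mf(f_1)+\beta\mf(f_2)$.

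The main obstacles I anticipate are (i) verifying the precise hypotheses of Ryll-Nardzewski, in particular pinning down a version that applies to an arbitrary (not necessarily locally compact) topological group acting by isometries on a weakly compact convex set; and (ii) arranging the product-space trick cleanly so that linearity of $\mf$ falls out automatically, rather than via a separate invariant-integration argument. The $\ep$-comparison equating $c_L$ and $c_R$ is elementary once both fixed points are in hand, but conceptually it is the heart of the proof: it fuses the existence statement from Ryll-Nardzewski with the uniqueness and bi-invariance required of $\mf$.
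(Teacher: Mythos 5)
Your proposal is correct, and up to the additivity step it follows the same route as the paper: Ryll--Nardzewski applied to the closed convex hulls $Q_l(f)=\overline{\co}(Gf)$ and $Q_r(f)=\overline{\co}(fG)$ (weakly compact by Krein--Smulian and Mazur), the observation that an invariant element of $C_b(G)$ is a constant, and the two-sided $\ep$-comparison of convex approximants showing the left and right constants agree and are unique --- the paper compresses exactly your comparison into the phrase ``using convex approximations of both constants''. Where you genuinely diverge is linearity: the paper proves $\mf(f_1+f_2)=\mf(f_1)+\mf(f_2)$ inside $C_b(G)$ by iterating convex combinations, using that $Q_l\big(\sum_i s_i g_i\cdot f_2\big)\subset Q_l(f_2)$ carries the same constant, and concluding that the constant $\mf(f_1)+\mf(f_2)$ lies in $Q_l(f_1+f_2)$; you instead run Ryll--Nardzewski once more for the diagonal action on $\overline{\co}\{(g\cdot f_1,g\cdot f_2)\colon g\in G\}\subset C_b(G)\oplus C_b(G)$, identify the fixed point as $(\mf(f_1),\mf(f_2))$ by projecting onto each coordinate, and extract a \emph{single} convex combination approximating both coordinates simultaneously. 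Both arguments are sound; your product trick gives a shorter, more symmetric additivity proof that handles scalars at the same time, at the cost of re-verifying the Ryll--Nardzewski hypotheses in the direct sum, while the paper's iteration stays inside the original hulls and uses the fixed-point theorem only once per function. Two small remarks: your anticipated obstacle (i) is not an issue --- Ryll--Nardzewski requires no continuity in the group variable, only a norm-distal semigroup of weakly continuous affine self-maps of a weakly compact convex set, which translations (being linear isometries) provide, exactly as in the paper; and note that your uniqueness argument uses property (d) of a competing mean (its value is a constant lying in $Q_l(f)$, hence \emph{the} constant), which suffices for the statement as phrased, whereas the paper establishes the stronger fact that (a), (b) and left invariance alone determine $\mf$, since positivity of a unital functional gives the norm continuity needed to see that the competitor is constant on $Q_l(f)$.
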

\begin{proof} For $f\in\WAP(G)$, let us denote by $Q_l(f)=\overline{\co}(Gf)$ the norm closed convex hull of $Gf$ and similarly $Q_r(f)=\overline{\co}(fG)$.
The group $G$ acts by left translations on $Q_l(f)$ which are affine transformations and are weakly continuous since, for every fixed $g$, the map $f\mapsto g\cdot f$ is linear and isometric.
Moreover, if $\psi_1,\psi_2\in Q_l(f)$ are such that $\psi_1\not=\psi_2$, then
$0\notin \overline{\{ g\cdot\psi_1-g\cdot \psi_2 \colon g\in G\}}^{\Vert\cdot\Vert_\infty}$ and thus the action of $G$ on $Q_l(f)$ is distal. By Ryll-Nardzewski Theorem, there exists $c_l(f)\in Q_l(f)$ such that $g\cdot c_l(f)=c_l(f)$ for every $g\in G$. This means that $c_l(f)$ is constant.

Similarly, $G$ acts on the right on $Q_r(f)$, and by the same arguments, $Q_r(f)$ contains a constant $c_r(f)$. Using convex approximations of both constants, it is easy to check that $Q_l(f)$ and $Q_r(f)$ contain the same constant which is then unique. 

Thus, by definition, $\mf(f)=c_l(f)=c_r(f)$ for every $f\in \WAP(G)$.

If a linear form $\mf':\WAP(G)\rightarrow\C$ satisfies (a) and (b) and if $\mf'(g\cdot f)=\mf'(f)$ for all $f\in\WAP(G)$ and $g\in G$, then $\mf'$ is constant on $Q_l(f)$ and we infer that $\mf'(f)=c_l(f)$. If $\mf''$ is another left-invariant mean, one has necessarily $\mf''(f)=\mf'(f)$ by the above remarks. This proves uniqueness of $\mf$.
Furthermore, the fact that $\mf(f)$ belongs to $Q_l(f)\cap Q_r(f)$ implies (d) and then the right invariance of $\mf$. 

Properties (a), (b), (c) and (d) are obvious, as well as the fact that $\mf(\alpha f)=\alpha\mf(f)$ for all $\alpha\in \C$ and $f\in \WAP(G)$. We are left to prove that, for $f_1,f_2\in\WAP(G)$, one has $\mf(f_1+f_2)=\mf(f_1)+\mf(f_2)$. Let us fix $\ep>0$. There
exist $s_1,\ldots,s_m>0$ such that $\sum_i s_i=1$, and $g_1,\ldots,g_m\in G$ such that 
\[
\Big\Vert\sum_i s_ig_i\cdot f_1-\mf(f_1)\Big\Vert_\infty\leq\frac{\varepsilon}{2}.
\]
for every $g\in G$. But
$
\mf(f_2)=\mf\Big(\sum_i s_i g_i\cdot f_2\Big).
$
Indeed, $\sum_i s_i g_i\cdot f_2\in Q_l(f_2)$, and as the latter set is convex, we have
\[
Q_l\Big(\sum_i s_i g_i\cdot f_2\Big)\subset Q_l(f_2).
\] 
Hence the constant in the left-hand convex set is equal to the one in $Q_l(f_2)$. Thus there exist $t_1,\ldots, t_n>0$ such that $\sum_j t_j=1$, and $h_1,\ldots,h_n\in G$ such that 
\begin{align*}
\lefteqn{\Big\Vert \mf(f_2)-\sum_{i,j}s_it_j (h_jg_i)\cdot f_2\Big\Vert_\infty}\\
&=\Big\Vert \mf\Big(\sum_i s_i g_i\cdot f_2\Big)-\sum_{j}t_jh_j\cdot\Big(\sum_i s_i g_i\cdot f_2\Big)\Big\Vert_\infty
\leq
\frac{\ep}{2}.
\end{align*}
As $\sum_{i,j}s_it_j=1$, one has
\[
\sum_{i,j}s_it_j(h_jg_i)\cdot[f_1+f_2]\in Q_l(f_1+f_2)
\]
and
\begin{align*}
\lefteqn{\Big\Vert \sum_{i,j}s_it_j(h_jg_i)\cdot[f_1+f_2]-\mf(f_1)-\mf(f_2)\Big\Vert_\infty
}\\
&\leq
\Big\Vert \sum_{i,j}s_it_j(h_jg_i)\cdot f_1-\mf(f_1)\Big\Vert_\infty
+
\Big\Vert \sum_{i,j}s_it_j(h_jg_i)\cdot f_2-\mf(f_2)\Big\Vert_\infty\\
&\leq
\sum_j t_j\Big\Vert\sum_i s_i h_j\cdot g_i\cdot f_1-\mf(f_1)\Big\Vert_\infty +
\frac{\ep}{2}\\
&=
\sum_j t_j\Big\Vert h_j\cdot \Big(\sum_i s_ig_i\cdot f_1-\mf(f_1)\Big)\Big\Vert_\infty
+\frac{\ep}{2}\leq 
\ep.
\end{align*}
This shows that $\mf(f_1)+\mf(f_2)\in Q_l(f_1+f_2)$.
The proof is now complete.
\end{proof}

We end the appendix with the following result on almost periodic functions on the topological group $G$ which is used in Theorems \ref{map} and \ref{ap}. It is part of \cite[Theorem 16.2.1]{DiC}.

\begin{theorem}\label{apgroup}
Let $G$ be a topological group and let $f\in C_b(G)$. Then the following conditions are equivalent:
\begin{enumerate}
\item [(1)] The left orbit $Gf$ is relatively compact in the norm topology, namely, $f$ is almost periodic.
\item [(2)] The right orbit $fG$ is relatively compact in the norm topology.
\item [(3)] The set $\{g\cdot f\cdot h\colon g,h\in G\}$ is relatively compact in the norm topology.
\item [(4)] The function $f$ is a uniform limit over $G$ of linear combinations of coefficients of finite-dimensional, irreducible continuous unitary representations of $G$.
\end{enumerate}
\end{theorem}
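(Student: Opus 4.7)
The strategy is the classical one via the Bohr compactification. I would establish the easy implications $(4)\Rightarrow(3)\Rightarrow(1)$ and $(3)\Rightarrow(2)$ first, and then tackle the substantial implication $(1)\Rightarrow(4)$; the equivalence $(1)\Leftrightarrow(2)$ will follow because (4) is symmetric under the involution $f\mapsto f^{\vee}$ with $f^{\vee}(g):=f(g^{-1})$, which sends a coefficient of an irreducible representation $\pi$ to one of the contragredient representation $\bar\pi$ (again finite-dimensional and continuous).

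For $(4)\Rightarrow(3)$: given a coefficient $f(x)=\langle\pi(x)\xi|\eta\rangle$ of a continuous unitary representation $\pi\colon G\to U(V)$ with $d:=\dim V<\infty$, the image $\overline{\pi(G)}$ is contained in the compact group $U(V)\cong U(d)$, and the two-sided translate
\[
(g\cdot f\cdot h)(x)=\langle \pi(x)\pi(h)\xi|\pi(g)\eta\rangle
\]
depends norm-continuously on $(\pi(g),\pi(h))$; so the orbit $\{g\cdot f\cdot h\colon g,h\in G\}$ is a continuous image of the compact set $\overline{\pi(G)}\times\overline{\pi(G)}$ in $C_b(G)$, hence norm-relatively compact. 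Finite linear combinations and uniform limits preserve this property, giving (3). The implications $(3)\Rightarrow(1)$ and $(3)\Rightarrow(2)$ are then immediate.

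For $(1)\Rightarrow(4)$, the main step, I would construct the Bohr compactification of $G$ as follows. Set $A:=\AP(G)$, a unital translation-invariant $C^*$-subalgebra of $C_b(G)$ (the verification is parallel to that of Proposition \ref{4.3}, with norm compactness replacing weak compactness). Let $bG$ denote the Gelfand spectrum of $A$, equipped with the weak-$*$ topology; it is a compact Hausdorff space, and the canonical evaluation map $\iota\colon G\to bG$ has dense range. The next task is to extend the group operations from $G$ to $bG$ so that $bG$ becomes a compact topological group. Once this is done, the Peter--Weyl theorem applied to $bG$ writes every element of $C(bG)\cong A=\AP(G)$ as a uniform limit of linear combinations of matrix coefficients of continuous finite-dimensional irreducible unitary representations of $bG$; these pull back along $\iota$ to coefficients of the required representations of $G$, yielding (4).

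The main obstacle lies in showing that $bG$ is a topological group, i.e., that multiplication and inversion extend continuously. A general compact semitopological semigroup can fail joint continuity of multiplication; here the crucial input is the norm-relative compactness of the two-sided orbits $\{g\cdot f\cdot h\colon g,h\in G\}$, an equivalent form of (1) that must be extracted first via an Arzel\`a--Ascoli argument converting relative compactness of one-sided orbits into equicontinuity of the family of translations on $A$. This equicontinuity is exactly the hypothesis an Ellis-type theorem needs in order to upgrade separate to joint continuity of multiplication on $bG$, and continuity of inversion on $bG$ then follows by a standard compactness argument. Once $bG$ is recognized as a compact topological group, Peter--Weyl is classical and completes the proof.
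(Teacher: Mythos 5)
The paper itself gives no proof of this theorem: it is quoted from \cite[Theorem 16.2.1]{DiC}, so your argument can only be measured against the classical one, which is indeed the route you chose. The parts you actually carry out are correct: $(4)\Rightarrow(3)$ via compactness of $\overline{\pi(G)}$ in a finite-dimensional unitary group and stability of norm-compact two-sided orbits under linear combinations and uniform limits; $(3)\Rightarrow(1),(2)$; and the reduction of $(2)$ to $(1)$ through $f\mapsto f^{\vee}$ and the contragredient representation.

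What is not yet a proof is $(1)\Rightarrow(4)$, where the two steps that carry the real content are only announced. First, the passage from norm-compactness of $Gf$ alone to norm-compactness of $fG$ and of the two-sided orbit: your ``equicontinuity of the family of translations on $A$'' is not the right statement, since translations act isometrically on $C_b(G)$ and are therefore trivially equicontinuous as operators --- Arzel\`a--Ascoli applied to them yields nothing. The argument that works is: set $X=\overline{Gf}$ (norm compact) and observe $(f\cdot h)(x)=(x^{-1}\cdot f)(h)$, so each $f\cdot h$ is obtained by composing the map $x\mapsto x^{-1}\cdot f\in X$ with the evaluation $\mathrm{ev}_h$ restricted to $X$; these restrictions are uniformly bounded and $1$-Lipschitz on the compact metric space $X$, hence relatively compact in $C(X)$ by Arzel\`a--Ascoli, and $\Vert f\cdot h-f\cdot h'\Vert_\infty\leq\Vert \mathrm{ev}_h-\mathrm{ev}_{h'}\Vert_{C(X)}$ transfers this to $fG$; the two-sided orbit follows by covering $fG$ with finitely many $\ep$-balls and using that right translations are isometries. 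Second, the group structure on $bG$: extending the product to arbitrary characters, proving associativity, joint continuity and, above all, the existence of inverses, and identifying $C(bG)$ with $\AP(G)$ equivariantly is the bulk of the cited source; invoking ``an Ellis-type theorem'' does not cover it, because Ellis' theorem presupposes a group with separately continuous multiplication, whereas here the group structure of $bG$ is precisely what must be established. A cleaner finish avoids the spectrum: let $E$ be the closure of the right-translation operators $\{R_h\}$ in the topology of pointwise norm convergence on $\AP(G)$; by the first step and Tychonoff it is compact, composition is jointly continuous because its elements are isometries, each element is surjective (an isometry of the norm-compact closed two-sided orbit of each $f$ into itself is onto), and a routine compactness/cancellation argument makes $E$ a compact topological group; since $f(g)=(R_g f)(e)$, $f$ is the pullback of the continuous function $T\mapsto (Tf)(e)$ along the continuous homomorphism $g\mapsto R_g$ with dense range, and Peter--Weyl on $E$ gives $(4)$. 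As written, your text is an accurate plan of the standard proof, but these two steps are its actual substance and still need to be supplied.
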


\par\vspace{2mm}

\bibliographystyle{plain}
\bibliography{refFWap}

\end{document}